  \renewcommand{\Pr}{\mbox{\rm Pr}}	
  \newcommand{\E}{\mathbb{E}}
  \newcommand{\R}{\mathbb{R}} 
  \newcommand{\C}{\mathbb{C}} 
  \newcommand{\Z}{\mathbb{Z}} 
  \newcommand{\st}{:\,} 
  \newcommand{\eps}{\varepsilon}
  \DeclareMathOperator{\diag}{diag}
  \newcommand{\beq}{\begin{equation}}
  \newcommand{\eeq}{\end{equation}}
  \newcommand{\beqn}{\begin{equation*}}
  \newcommand{\eeqn}{\end{equation*}}
  \newcommand{\beqr}{\begin{eqnarray}}
  \newcommand{\eeqr}{\end{eqnarray}}
  \newcommand{\beqrn}{\begin{eqnarray*}}
  \newcommand{\eeqrn}{\end{eqnarray*}}
  \newcommand{\bmline}{\begin{multline}}
  \newcommand{\emline}{\end{multline}}
  \newcommand{\bmlinen}{\begin{multline*}}
  \newcommand{\emlinen}{\end{multline*}}
  \theoremstyle{plain}
  \newtheorem{theorem}{Theorem}[section]
  \newtheorem{lemma}[theorem]{Lemma}
  \newtheorem{setting}[theorem]{Setting}
  \newtheorem{claim}[theorem]{Claim}
  \theoremstyle{definition}
  \theoremstyle{remark}
  \newtheorem{remark}[theorem]{Remark}
  \renewenvironment{proof}[1][]{
    	\begin{trivlist}
     	\item[\hspace{\labelsep}{\em\noindent Proof#1:\/}]}
     	{{\hfill$\Box$}
    	\end{trivlist}
  }
  \newtheorem*{rep@theorem}{\rep@title}
  \newcommand{\newreptheorem}[2]{%
  \newenvironment{rep#1}[1]{%
  \def\rep@title{#2 \ref{##1}}%
  \begin{rep@theorem}}%
  {\end{rep@theorem}}}
\title{The Littlewood-Offord Problem for Markov Chains}
\author{Shravas Rao}
\thanks{
    This material is based upon work supported by the National Science Foundation Graduate Research Fellowship Program under Grant No. DGE-1342536.
}
\date{\today}
\begin{document}

\maketitle

\begin{abstract}
The celebrated Littlewood-Offord problem asks for an upper bound on the probability that the random variable $\eps_1 v_1 + \cdots + \eps_n v_n$ lies in the Euclidean unit ball, where $\eps_1, \ldots, \eps_n \in \{-1, 1\}$ are independent Rademacher random variables and $v_1, \ldots, v_n \in \R^d$ are fixed vectors of at least unit length.
We extend many known results to the case that the $\eps_i$ are obtained from a Markov chain, including the general bounds first shown by Erd\H{o}s in the scalar case and Kleitman in the vector case, and also under the restriction that the $v_i$ are distinct integers due to S\'ark\"ozy and Szemeredi.
In all extensions, the upper bound includes an extra factor depending on the spectral gap.
We also construct a pseudorandom generator for the Littlewood-Offord problem using similar techniques.
\end{abstract}

\section{Introduction}

Let $v_1, \ldots, v_n \in \R^d$ be fixed vectors of Euclidean length at least $1$, and let $\eps_1, \ldots, \eps_n$ be independent Rademacher random variables, so that $\Pr[\eps_i = 1] = \Pr[\eps_i = -1] = 1/2$ for all $i$.
The celebrated Littlewood-Offord problem~\cite{LO43} asks for an upper bound on the probability,
\begin{equation}\label{eq:main}
\Pr[\eps_1 v_1 + \cdots + \eps_n v_n \in B]
\end{equation}
for an open Euclidean ball $B$ with radius $1$.
This question was first investigated by Littlewood and Offord for the case $d=1$ and $d=2$~\cite{LO43}.
A tight bound of $\binom{n}{n/2}/2^n = \Theta(1/\sqrt{n})$ when $n$ is even, with the worst case being when the vectors are equal, was found by Erd\H{o}s for the case $d=1$ using a clever combinatorial argument~\cite{E45}.
Such bounds can be contrasted with concentration inequalities like the Hoeffding inequality in the scalar case and the Khintchine-Kahane inequality in the vector case, both of which give an upper bound on the probability $\Pr[\|\eps_1 v_1 + \cdots + \eps_n v_n\| \geq k\sqrt{n}]$ for positive $k$.
In contrast, an upper bound on Eq.~\eqref{eq:main} can be considered a form of anti-concentration, that is showing that the random sum is unlikely to be in $B$.

In the case that the $v_i$ are $d$-dimensional vectors, a tight bound up to constant factors of $C/\sqrt{n}$ was found by Kleitman~\cite{K70}, and was improved by series of work~\cite{S83, S85, FF88, TV12}.
In the scalar case, under the restriction that $v_1, \ldots, v_n$ are distinct integers, an upper bound of $n^{-3/2}$ was found by S\'ark\"ozy and Szemeredi~\cite{SS65}.

In this work, we investigate the case in which $\eps_1, \ldots, \eps_n$ are not independent, but are obtained from a stationary reversible Markov chain $\{Y_i\}_{i=1}^{\infty}$ with state space $[N]$ and transition matrix $A$, and functions $f_1, \ldots, f_n: [N] \rightarrow \{-1, 1\}$, using $\eps_i = f_i(Y_i)$.

Let $\mu$ be the stationary distribution for the Markov chain, and let $E_{\mu}$ be the associated averaging operator defined by $(E_{\mu})_{ij} = \mu_j$, so that for $v \in \R^N$, $E_{\mu}v = \E_{\mu}[v]\mathbf{1}$ where $\mathbf{1}$ is the vector whose entries are all $1$.
Like many results on Markov chains, our generalizations will be in terms of the quantity
\[
  \lambda = \|A-E_{\mu}\|_{L_2(\mu) \rightarrow L_2(\mu)}.
\] 
If the $Y_i$ are independent, that is $A = E_{\mu}$, it follows that $\lambda = 0$.
Often, if $\lambda$ is small, the corresponding Markov chain behaves almost as if it were independent.
In particular, there exists a Berry-Esseen theorem for Markov chains~\cite{M96} and various concentration inequalities for Markov chain~\cite{G98, L98, LCP04}.
In all of these cases, there is an extra factor in the bounds in terms of $\lambda$ which disappears if $\lambda = 0$.

We show that the Littlewood-Offord problem can also be generalized to Markov chains with an extra dependence on $\lambda$, for all dimensions.
We additionally consider the one-dimensional case when the scalars are distinct integers.
In all cases, the proof is based off a Fourier-analytic argument due to Hal\'asz~\cite{H87}.

The random variables in all cases are defined in the same way, which we state below.
\begin{setting}\label{setting}
Let $\{Y_i\}_{i=1}^{\infty}$ be a stationary reversible Markov chain with state space $[N]$, transition matrix $A$, stationary probability measure $\mu$, and averaging operator $E_{\mu}$ so that $Y_1$ is distributed according to $\mu$.
Let $\lambda = \|A-E_{\mu}\|_{L_2(\mu) \rightarrow L_2(\mu)}$, and let $f_1, \ldots, f_n: [N] \rightarrow \{-1, 1\}$ be such that $\E[f_i(Y_i)] = 0$ for every $i$.
Then consider the random variables $f_1(Y_1), f_2(Y_2), \ldots, f_n(Y_n)$.
\end{setting}

We obtain the following theorem that upper bounds the probability that the random sum is concentrated on any unit ball.
In the case that the $v_i$ are one-dimensional, the bound is tight up to a factor of $\sqrt{(1-\lambda)/(1+\lambda)}$ in $\lambda$.
Note that the bound depends on the dimension, while in the independent case, there is no dependence on the dimension.

\begin{theorem}\label{thm:highdim}

Assume the setting of~\ref{setting}.
Let $x_0 \in \R^d$ and $R \geq \frac{1}{C\sqrt{d}}$ for some universal constant $C'$.
For every set of vectors $v_1, \ldots, v_n \in \R^d$ of Euclidean length at least $1$,
\begin{equation*}
\Pr[\|f_1(Y_1)v_1+f_2(Y_2)v_2+\cdots +f_n(Y_n)v_n-x_0\|_{\ell_2} \leq R] \leq \frac{C\cdot R\sqrt{d}}{(1-\lambda)\sqrt{n}}.
\end{equation*}
for some universal constant $C$.
\end{theorem}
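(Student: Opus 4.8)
The plan is to follow Hal\'asz's Fourier-analytic proof of the classical Littlewood--Offord bound, the one genuinely new ingredient being a bound on the characteristic function of
\[
S \;:=\; f_1(Y_1)v_1 + f_2(Y_2)v_2 + \cdots + f_n(Y_n)v_n
\]
that survives the passage from independence to a reversible Markov chain. Write $\theta_k(\xi) = 2\pi\ip{\xi,v_k}$ and $\phi_S(\xi) = \E\bigl[e^{2\pi i\ip{\xi,S}}\bigr]$. First I would pass to the Fourier side: fix a Fej\'er-type bump $\psi\colon\R^d\to[0,\infty)$ with $\psi\ge\1_{\overline B(x_0,R)}$, with $\widehat\psi\ge 0$ supported in $B(0,c/R)$, and with $\|\widehat\psi\|_\infty\asymp R^d/\omega_d$, where $\omega_d$ denotes the volume of the unit $d$-ball (concretely, $\psi$ proportional to $\bigl|\widehat{\1_B}\bigr|^2$ with $B=B(0,c/(2R))$). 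Then
\[
\Pr\bigl[\|S-x_0\|_{\ell_2}\le R\bigr]\;\le\;\E[\psi(S)]\;=\;\int_{\R^d}\widehat\psi(\xi)\,e^{-2\pi i\ip{\xi,x_0}}\phi_S(\xi)\,d\xi\;\le\;\|\widehat\psi\|_\infty\int_{\|\xi\|\le c/R}|\phi_S(\xi)|\,d\xi .
\]

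The crux is the bound on $|\phi_S(\xi)|$. Expanding $e^{2\pi i\ip{\xi,S}}=\prod_k\bigl(\cos\theta_k(\xi)+i\sin\theta_k(\xi)f_k(Y_k)\bigr)$ and using the Markov property gives $\phi_S(\xi)=\ip{\1,\,D_1AD_2A\cdots AD_n\1}_{L_2(\mu)}$, where $D_k$ is multiplication by the unit-modulus function $\cos\theta_k(\xi)\1+i\sin\theta_k(\xi)f_k$ (so each $D_k$ is unitary on $L_2(\mu)$) and $\ip{u,v}_{L_2(\mu)}=\sum_x\mu_x\overline{u_x}v_x$. Put $P_n=D_n\1$ and $P_k=D_kAP_{k+1}$, so $\phi_S(\xi)=\E_\mu[P_1]=\ip{\1,P_1}_{L_2(\mu)}$. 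Reversibility makes $A$ self-adjoint with $A=E_\mu+B$, $B\1=0$, $\|B\|_{L_2(\mu)\to L_2(\mu)}\le\lambda$, and $E_\mu,B$ having orthogonal ranges. Writing $P_k=\E_\mu[P_k]\1+P_k^\perp$ and setting $r_k=\|P_k^\perp\|^2/\|P_k\|^2\in[0,1]$, the orthogonality of $E_\mu$ and $B$ yields
\[
\|P_k\|^2\;=\;\|AP_{k+1}\|^2\;=\;\bigl|\E_\mu[P_{k+1}]\bigr|^2+\|BP_{k+1}\|^2\;\le\;\|P_{k+1}\|^2\bigl(1-(1-\lambda^2)r_{k+1}\bigr),
\]
and since $\|P_n\|=\|\1\|=1$ this telescopes to $|\phi_S(\xi)|^2\le\|P_1\|^2\le\prod_k\bigl(1-(1-\lambda^2)r_k\bigr)$. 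It remains to see that the $r_k$ cannot all be small while the $\sin\theta_k(\xi)$ are large: a short case analysis of $P_k=D_k(E_\mu+B)P_{k+1}$ --- comparing the orthogonal mass $\bigl|\E_\mu[P_{k+1}]\bigr|\,|\sin\theta_k(\xi)|$ that the factor $i\sin\theta_k(\xi)f_k$ injects into $\1^\perp$ against the at most $\lambda\|P_{k+1}^\perp\|$ that can enter through $B$ --- shows that for every $k$ either $r_k\gtrsim\sin^2\theta_k(\xi)$ or $r_{k+1}\gtrsim\sin^2\theta_k(\xi)$, hence $\sum_k r_k\gtrsim\sum_k\sin^2\theta_k(\xi)$. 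Therefore
\[
|\phi_S(\xi)|\;\le\;\exp\!\Bigl(-c_0(1-\lambda^2)\sum_k\sin^2\bigl(2\pi\ip{\xi,v_k}\bigr)\Bigr),
\]
which for $\lambda=0$ is (up to the constant) the classical bound coming from $\phi_S(\xi)=\prod_k\cos\bigl(2\pi\ip{\xi,v_k}\bigr)$.

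It then remains to estimate $\int_{\|\xi\|\le c/R}\exp\!\bigl(-c_0(1-\lambda^2)\sum_k\sin^2(2\pi\ip{\xi,v_k})\bigr)\,d\xi$, which is the analogue of Hal\'asz's level-set computation. Near the origin $\sin^2(2\pi\ip{\xi,v_k})\gtrsim\ip{\xi,v_k}^2$, so the integrand is $\lesssim\exp(-c_1(1-\lambda^2)\,\xi^\top M\xi)$ with $M=\sum_k v_kv_k^\top\succeq 0$ and $\Tr M=\sum_k\|v_k\|^2\ge n$; integrating this Gaussian over $B(0,c/R)$ is largest when $M=\diag(n,0,\dots,0)$, where it is $\asymp\omega_{d-1}(c/R)^{d-1}/\sqrt{(1-\lambda^2)n}$. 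The hypothesis $R\ge 1/(C'\sqrt d)$ enters precisely here: it caps the radius of the Fourier domain at $O(\sqrt d)$, which --- together with the decay of the integrand and a counting bound on the near-resonant frequencies (those $\xi$ with $\ip{\xi,v_k}$ within $o(1)$ of $\tfrac12\Z$ for almost all $k$) --- confines the bulk of the integral to a neighborhood of the origin, so the estimate above is essentially the whole contribution. Combining with the first display and using $\|\widehat\psi\|_\infty\asymp 1/\bigl(\omega_d(c/R)^d\bigr)$ together with $\omega_{d-1}/\omega_d\asymp\sqrt d$,
\[
\Pr\bigl[\|S-x_0\|_{\ell_2}\le R\bigr]\;\lesssim\;\frac{R\sqrt d}{\sqrt{(1-\lambda^2)\,n}}\;\le\;\frac{C\,R\sqrt d}{(1-\lambda)\sqrt n},
\]
as desired. (As in the classical argument, one first reduces to $\|v_k\|$ polynomially bounded so that $\sin^2\gtrsim(\cdot)^2$ is legitimate on the Fourier domain; a few vectors much longer than the rest only sharpen anti-concentration and are handled separately. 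The first inequality keeps the slightly stronger $1/\sqrt{1-\lambda^2}$, consistent with the claimed near-tightness up to $\sqrt{(1-\lambda)/(1+\lambda)}$.)

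The hard part will be the characteristic-function estimate. In the independent case $\phi_S(\xi)$ factors exactly as $\prod_k\cos\theta_k(\xi)$ and the geometry takes over from there; for a Markov chain there is no product formula, and one must manufacture an effective product out of the operator string $D_1AD_2A\cdots AD_n$. Reversibility is exactly what makes this possible --- it turns $A$ into a self-adjoint contraction whose only non-contracting direction is $\1$, so the $L_2(\mu)$-geometry of the partial products $P_k$ decouples multiplicatively with contraction factor governed by the spectral gap --- and the delicate bookkeeping is the passage from $\prod_k\bigl(1-(1-\lambda^2)r_k\bigr)$ to a bound in terms of $\sum_k\sin^2\theta_k(\xi)$, i.e.\ ruling out that the orthogonal mass $r_k$ stays small while $\sin\theta_k(\xi)$ is large.
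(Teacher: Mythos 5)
Your operator-theoretic bound on the characteristic function is sound and is a genuinely different route from the paper's: writing $\phi_S(\xi)$ as the string $D_1AD_2A\cdots AD_n\1$, splitting $A=E_\mu+B$ with $B\1=0$, $\|B\|_{L_2(\mu)\to L_2(\mu)}\le\lambda$, and tracking the orthogonal mass $r_k$ of the partial products does yield $|\phi_S(\xi)|\le\exp\bigl(-c_0(1-\lambda^2)\sum_k\sin^2(2\pi\ip{\xi,v_k})\bigr)$; your case analysis goes through once you note $r_n=\sin^2\theta_n$, treat the case $r_{k+1}\ge 1/2$ separately, and keep $r_1$ (your telescoping as written drops it -- harmless). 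The paper instead expands the same operator string via Lemma~\ref{lem:holderapplicationrefine} into a sum over $s\in\{0,1\}^{n-1}$ of products of cosines and controls a negative binomial moment (Claim~\ref{claim:combinarg}); in one dimension your pointwise exponential bound would serve the same purpose and is arguably cleaner.

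The gap is in the $d$-dimensional Fourier step, and it is not mere bookkeeping: the paper never integrates a $d$-dimensional characteristic function at all -- it applies a random rotation, projects to the first coordinate (Claim~\ref{claim:forrandomrot}), covers the interval of radius $R$ by $O(R\sqrt d)$ intervals of radius $1/(C'\sqrt d)$, and invokes the one-dimensional Lemma~\ref{thm:equal} -- precisely to avoid dimension-dependent losses. In your accounting the symbols $\asymp$ hide factors that are exponential in $d$. Concretely, the conclusion $\|\widehat\psi\|_\infty\cdot\omega_{d-1}(c/R)^{d-1}/\sqrt{(1-\lambda^2)n}\lesssim R\sqrt d/\sqrt{(1-\lambda^2)n}$ requires $\|\widehat\psi\|_\infty\le C\,c\sqrt{2\pi}\cdot\bigl(\omega_d(c/R)^d\bigr)^{-1}$ (using $\omega_{d-1}/\omega_d\asymp\sqrt d$), i.e.\ within an absolute constant of the uncertainty-principle lower bound $1\le\psi(x_0)\le\|\widehat\psi\|_\infty\,\omega_d(c/R)^d$ that every admissible majorant obeys; but your concrete choice $\psi\propto\bigl|\widehat{\1_{B(0,c/(2R))}}\bigr|^2$ satisfies $\|\widehat\psi\|_\infty\ge 2^d/\bigl(\omega_d(c/R)^d\bigr)$, and using instead the paper's Theorem~\ref{thm:esseen} with $\eps=c/R$ brings the prefactor $O(R/\sqrt d+\sqrt d/\eps)^d=(KR\sqrt d/c)^d$, which against $\omega_{d-1}(c/R)^{d-1}$ leaves a factor of order $(K\sqrt{2\pi e})^d$. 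So the argument as sketched proves the bound with $C^d$ in place of a universal constant, which in the relevant regime $R\asymp 1/\sqrt d$ is a much weaker statement; whether a ball majorant within $O(1)$ of the lower bound exists is itself a nontrivial extremal problem you would have to solve. In addition, the claim that near-resonant frequencies ($\ip{\xi,v_k}$ near $\tfrac12\Z$) contribute only a lower-order term over the ball of radius $c/R\asymp\sqrt d$ is exactly the hard Hal\'asz-type step in $d$ dimensions and is asserted rather than proved (the cross-section factor $(1-t^2/\rho^2)^{(d-1)/2}$ does localize to $|\ip{\xi,v_k}|/\|v_k\|\lesssim\rho/\sqrt d$, which helps, but this must be carried out uniformly over the $v_k$, including long ones). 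The most economical repair is to keep your characteristic-function bound but do the geometry as the paper does: rotate randomly, project to one coordinate, cover $[-R,R]$ by $O(R\sqrt d)$ intervals of length $2/(C'\sqrt d)$, and integrate over $[-1,1]$ in one dimension, where no dimension-dependent constants arise.
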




In the one-dimensional case, we also consider the restriction that $v_1, \ldots, v_n$ are \emph{distinct} integers.

\begin{theorem}\label{thm:diff}
Assume the setting of~\ref{setting}.
Then for every set of distinct integers $v_1, \ldots, v_n \geq 1$ and $x_0 \in \Z$,
\[
\Pr[f_1(Y_1)v_1+f_2(Y_2)v_2+\cdots +f_n(Y_n)v_n = x_0] \leq \frac{C}{(1-\lambda)^3n^{3/2}}
\]
for some universal constant $C$.
\end{theorem}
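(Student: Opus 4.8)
The plan is the Fourier-analytic (circle-method) route that goes back to Hal\'asz. Write $S := f_1(Y_1)v_1 + \cdots + f_n(Y_n)v_n$; since the $v_j$ and $x_0$ are integers, $S$ is $\Z$-valued, so by orthogonality of characters
\[
\Pr[S = x_0] = \int_0^1 \E\!\left[e^{2\pi i\theta S}\right] e^{-2\pi i\theta x_0}\,d\theta \;\le\; \int_0^1 \bigl|\E[e^{2\pi i\theta S}]\bigr|\,d\theta .
\]
Everything then reduces to (i) a pointwise bound on the characteristic function $\varphi(\theta) := \E[e^{2\pi i\theta S}]$, and (ii) an estimate of $\int_0^1|\varphi(\theta)|\,d\theta$ exploiting that the $v_j$ are \emph{distinct}.

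For (i) I would write $\varphi(\theta)$ in transfer-operator form: if $G_j(\theta)$ is the diagonal (hence unitary on $L_2(\mu)$) operator with $k$-th entry $e^{2\pi i\theta f_j(k)v_j}$, then by stationarity of the chain
\[
\varphi(\theta) = \bigl\langle \1,\; G_1(\theta)\,A\,G_2(\theta)\,A\cdots A\,G_n(\theta)\,\1\bigr\rangle_{L_2(\mu)} .
\]
The hypothesis $\E[f_j(Y_j)] = 0$ forces $\mu$ to split evenly between $\{f_j=1\}$ and $\{f_j=-1\}$, so $E_\mu G_j(\theta)\1 = \cos(2\pi\theta v_j)\1$ and $G_j(\theta)\1 = \cos(2\pi\theta v_j)\1 + h_j$ with $h_j\perp\1$ and $\|h_j\|_{L_2(\mu)} = |\sin(2\pi\theta v_j)|$. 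Decomposing $A = E_\mu + (A-E_\mu)$ with $\|A-E_\mu\|_{L_2(\mu)\to L_2(\mu)} = \lambda$ and $A\1 = \1$, one tracks the running vector $G_k(\theta)A\cdots AG_n(\theta)\1$ through its $\1$-component and its orthogonal complement: each factor $AG_k(\theta)$ acts on this pair by a $2\times 2$ nonnegative matrix depending only on $|\cos(2\pi\theta v_k)|$, $|\sin(2\pi\theta v_k)|$ and $\lambda$, and whenever $\sin(2\pi\theta v_k)$ is bounded away from $0$ it either shrinks the $\1$-mass or diverts it into the orthogonal part, where $A$ contracts it by $\lambda$. Iterating (and using $|\varphi(\theta)|\le\|G_1(\theta)A\cdots G_n(\theta)\1\|$) yields
\[
|\varphi(\theta)| \;\le\; \exp\!\Bigl(-c\,(1-\lambda)\sum_{j=1}^n \|\theta v_j\|_{\R/\Z}^2\Bigr)
\]
for a universal $c>0$, where $\|t\|_{\R/\Z}$ is the distance from $t$ to the nearest integer (via $|\sin(\pi t)|\asymp\|t\|_{\R/\Z}$); this is the $\lambda$-weighted analogue of Hal\'asz's inequality and is proved by the same mechanism that underlies Theorem~\ref{thm:highdim}. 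Pushing this two-dimensional recursion through without extra loss is one technical crux: the naive triangle-inequality bound is too weak — it does not see cancellations such as $G_j(\theta)^2 = -I$ when $2\pi\theta v_j\in\frac\pi2+\pi\Z$ — so one must use the exact identity $(c'a+sb)^2 + (sa-c'b)^2 = a^2+b^2$ together with the contraction of the $\1^\perp$-component.

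For (ii), set $\kappa := c(1-\lambda)$; it remains to bound $\int_0^1\exp\bigl(-\kappa\sum_{j=1}^n\|\theta v_j\|_{\R/\Z}^2\bigr)\,d\theta$ for distinct positive integers $v_1,\dots,v_n$, which is exactly the quantity controlled by the argument of S\'ark\"ozy and Szemer\'edi~\cite{SS65}. On the major arc around $\theta=0$ one has $\sum_j\|\theta v_j\|_{\R/\Z}^2 = \theta^2\sum_j v_j^2 \ge \theta^2\sum_{j=1}^n j^2 \gtrsim \theta^2 n^3$ (the sorted $v_j$ satisfy $v_{(j)}\ge j$), contributing $\int_0^\infty e^{-\kappa\theta^2 n^3}\,d\theta \asymp \kappa^{-1/2}n^{-3/2}$; around a Farey fraction $a/q$ one rescales by $q$ and invokes the combinatorial input of~\cite{SS65} bounding how many $v_j$ can be divisible by $q$, then sums over all arcs. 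The main obstacle is to carry the dependence on $\kappa$ (equivalently on $1-\lambda$) uniformly through this Farey dissection; doing so and substituting $\kappa\asymp 1-\lambda$ produces the claimed factor $(1-\lambda)^{-3}$ and the bound $C(1-\lambda)^{-3}n^{-3/2}$.
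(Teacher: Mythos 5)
Your proposal follows a genuinely different route from the paper (continuous Fourier inversion plus a pointwise Hal\'asz-type exponential bound on the characteristic function, then a Farey/S\'ark\"ozy--Szemer\'edi integration), but as written it has real gaps, the central one being step (i). First, the inequality $|\varphi(\theta)| \le \exp\bigl(-c(1-\lambda)\sum_j \|\theta v_j\|_{\R/\Z}^2\bigr)$ is false as stated: take $\theta = 1/2$ and all $v_j$ odd (e.g.\ $v_j = 2j-1$, which are distinct); then $S \equiv n \pmod 2$ deterministically, so $|\varphi(1/2)| = 1$ while your bound forces exponential decay. The correct quantity, consistent with your own computation $\|h_j\|_{L_2(\mu)} = |\sin(2\pi\theta v_j)|$, is $\|2\theta v_j\|_{\R/\Z}^2$, and the subsequent arc analysis must be redone for that quantity. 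Second, even in corrected form the bound is not proved: the two-dimensional recursion you set up, bounding the $\1$-component and the $\1^\perp$-norm by nonnegative $2\times 2$ comparison matrices $\bigl(\begin{smallmatrix} |c_k| & \lambda |s_k| \\ |s_k| & \lambda \end{smallmatrix}\bigr)$, provably does not close -- when $\cos(2\pi\theta v_k)=0$ and $\lambda$ is close to $1$ these matrices have spectral radius about $(1+\sqrt5)/2$, so their product grows Fibonacci-fast even though the true $|\varphi|$ decays. You acknowledge this (''the naive triangle-inequality bound is too weak'') but the proposed fix -- the norm-preservation identity for $G_j$ plus contraction on $\1^\perp$ -- is a gesture, not an argument: after one step the orthogonal component is of the same order as the $\1$-component, and nothing you write rules out cancellation keeping the post-$G_k$ orthogonal projection small at every step. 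This is precisely the technical obstacle the paper circumvents; the claim that your inequality ''is proved by the same mechanism that underlies Theorem~\ref{thm:highdim}'' is not accurate, since the paper never establishes (or needs) any pointwise exponential bound. Third, step (ii) is asserted rather than carried out, and the $\lambda$-bookkeeping is internally inconsistent: your major-arc computation yields $\kappa^{-1/2}n^{-3/2} = (1-\lambda)^{-1/2}n^{-3/2}$, and no derivation is given of how the minor arcs produce the quoted $(1-\lambda)^{-3}$.

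For contrast, the paper's proof avoids both difficulties. It passes to $\Z_p$ for a suitable prime $p$ (Claim~\ref{claim:fordiff}, from Tao--Vu), writes the characteristic function as a transfer-operator product, and expands $A = T_j + (1-\lambda)E_\mu$ via Lemma~\ref{lem:holderapplicationrefine}. This replaces the full cosine product by cosine products over the random subset $t(\mathbf{s})$ of indices whose two neighboring steps both used $E_\mu$; Claim~\ref{claim:fordiff} is applied only to that sub-family of (still distinct) integers, giving $C/(|t(\mathbf{s})|+1)^{3/2}$, and the proof finishes with stochastic domination by a binomial with success probability $(1-\lambda)^2$, Jensen, and the negative-moment bound of Claim~\ref{claim:combinarg}. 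The factor $(1-\lambda)^{-3}$ then appears transparently as $((1-\lambda)^2 n)^{-3/2}$, with no pointwise bound on $\varphi$ and no Farey dissection needed. If you want to salvage your route, you would need to actually prove the corrected pointwise inequality (a nontrivial new lemma) and redo the S\'ark\"ozy--Szemer\'edi integration with explicit $\kappa$-dependence; as it stands, both pillars of the argument are unproven.
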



Finally, we consider a different setting, where rather than choosing $\eps_1, \ldots, \eps_n$ independently, we choose these uniformly at random from a subset $D$ of $\{-1, 1\}^n$ that we can construct explicitly.

\begin{theorem}\label{thm:prglo}
For every $n$, there exists an explicit set $D \subseteq \{-1, 1\}^n$ of cardinality at most $2^{C_1\sqrt{n}}$ for some universal constant $C_1$ such that the following holds.
For every $v_1, \ldots, v_n \geq 1$ and $x_0 \in \R$ and $\eps$ chosen uniformly at random from $D$
\begin{equation*}
\Pr[|\eps_1v_1+\eps_2 v_2+\cdots +\eps_n v_n-x_0| \leq 1] \leq \frac{C}{\sqrt{n}}.
\end{equation*}
for some universal constant $C$ independent of $n$.
\end{theorem}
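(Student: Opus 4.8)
The plan is to take $D$ to be the set of trajectories of a short random walk on an explicit bounded-degree expander, and to estimate the anti-concentration probability by the same Fourier-analytic method that underlies Theorem~\ref{thm:highdim}.

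\emph{Construction.} Set $m=\ceil{\sqrt n}$ and $k=\ceil{n/m}$, so $mk\ge n$ and $m,k=O(\sqrt n)$. Fix an explicit $d$-regular graph $G$ on the vertex set $\pmset{m}$, with $d=O(1)$ (such graphs exist for every $m$, e.g.\ via the zig-zag product), whose symmetric transition matrix $A$ satisfies $\|A-E_\mu\|_{L_2(\mu)\to L_2(\mu)}\le\lambda_0$ for the uniform measure $\mu$ and a fixed constant $\lambda_0<1$. Choose an injection $i\mapsto(j(i),p(i))\in[k]\times[m]$ (each coordinate gets one block index and one position, no pair reused, which is possible since $mk\ge n$), and for each $j\in[k]$ let $w_j\in\R^m$ have $(w_j)_{p(i)}=v_i$ whenever $j(i)=j$ and all other entries $0$. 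Let $Y_1,\dots,Y_k$ be the length-$k$ random walk on $G$ started from $\mu$, and let $D$ be the set of strings $\eps\in\pmset{n}$ obtained by setting $\eps_i=(Y_{j(i)})_{p(i)}$. There are at most $2^m d^{\,k}=2^{O(\sqrt n)}$ trajectories, so $|D|\le 2^{C_1\sqrt n}$; since $G$ is regular, $\mu$ is stationary, so each $\eps_i$ has mean $0$ over a uniform trajectory; and $\sum_{i=1}^n\eps_i v_i=\sum_{j=1}^k\ip{Y_j,w_j}$.

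\emph{Analysis.} By the standard Esseen-type inequality, $\Pr[|\sum_i\eps_iv_i-x_0|\le1]\le C\int_{-1/2}^{1/2}|\Psi(\theta)|\,d\theta$, where $\Psi(\theta)=\E\bigl[e^{2\pi i\theta\sum_j\ip{Y_j,w_j}}\bigr]$. Because each $Y_j$ is used for a single block, $\Psi$ is the characteristic function of a Markov-chain sum with $k$ steps whose step-$j$ ``potential'' is $e^{2\pi i\theta\ip{y,w_j}}$; its average against $\mu$ is $\prod_p\cos(2\pi\theta(w_j)_p)$, so that, were the $Y_j$ independent, $\Psi(\theta)$ would be exactly $\prod_j\prod_p\cos(2\pi\theta(w_j)_p)=\prod_{i=1}^n\cos(2\pi\theta v_i)$. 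Running the Fourier-analytic estimate behind Theorem~\ref{thm:highdim} for this time-inhomogeneous chain --- which tracks how the spectral gap $1-\lambda_0$ limits the deviation of the walk's characteristic function from the independent-case product --- gives a bound of the form $|\Psi(\theta)|\le\frac{C'}{1-\lambda_0}\prod_{i=1}^n|\cos(2\pi\theta v_i)|$ (up to additional terms whose integral over $\theta$ is $O(1/\sqrt n)$). It remains to note $\int_{-1/2}^{1/2}\prod_{i=1}^n|\cos(2\pi\theta v_i)|\,d\theta=O(1/\sqrt n)$, which is Erd\H{o}s's estimate: for small $|\theta|$ at least half the factors are $\le e^{-c\theta^2}$ since each $v_i\ge1$, so the product is $\le e^{-cn\theta^2/2}$. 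Since $\lambda_0$ is an absolute constant this yields $\Pr[|\sum_i\eps_iv_i-x_0|\le1]\le C/\sqrt n$.

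\emph{The main obstacle.} The real content is the estimate on $\Psi(\theta)$. Once $Y_{j-1}$ is fixed the next block carries no fresh randomness, and the chain obtained by repeating each $Y_j$ across its block is time-inhomogeneous with identity transitions inside blocks, so Setting~\ref{setting} does not apply verbatim; one must re-run the argument of Theorem~\ref{thm:highdim}, and the crucial point is that the spectral gap enters only as an overall factor $(1-\lambda_0)^{-O(1)}$, not once per block --- a per-block loss would be $(1-\lambda_0)^{-\Omega(\sqrt n)}$ and would be fatal. The other thing to verify is that the $k=O(\sqrt n)$ walk steps, together with the $m=O(\sqrt n)$ genuinely independent coordinates inside each block, reconstruct the full $n$-fold product $\prod_{i=1}^n|\cos(2\pi\theta v_i)|$, so that the rate stays $1/\sqrt n$ (a mere two-level product of short random strings would give only $1/n^{1/4}$). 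The choice of the injection and the zero-padding of the $w_j$ are routine.
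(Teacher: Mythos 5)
Your construction is the paper's: an explicit expander on vertex set $\pmset{m}$ with $m\approx\sqrt n$, a walk of length $\approx\sqrt n$ started from the uniform distribution, and $D$ the set of concatenated vertex labels, with $|D|\le 2^{m}d^{k}=2^{O(\sqrt n)}$; the reduction via Ess\'een to bounding $\int|\Psi(\theta)|\,d\theta$ and the observation that $\langle u_j,\mu\rangle$ is the per-block cosine product are also exactly right. The gap is in the step you yourself flag as ``the main obstacle'': you assert, rather than prove, the bound $|\Psi(\theta)|\le\frac{C'}{1-\lambda_0}\prod_{i=1}^n|\cos(2\pi\theta v_i)|$ ``up to additional terms whose integral is $O(1/\sqrt n)$.'' As a pointwise statement this is false (take all $v_i=1$ and $\theta=1/4$: the product vanishes while $\Psi(\theta)$ need not), and with the error-term hedge it simply restates the theorem, since the whole content is the claim that the correction terms --- the contributions where the walk ``remembers'' its state and some blocks' cosines drop out --- integrate to $O(1/\sqrt n)$ with only a constant-power loss in $(1-\lambda_0)^{-1}$. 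Nothing in your write-up establishes this; ``re-run the argument of Theorem~\ref{thm:highdim}'' is precisely the part that has to be done, and it does not proceed through a pointwise comparison with the full $n$-fold cosine product.

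The way the paper closes this is not via such a domination but term by term: write $A=T+(1-\lambda)E_{\mu}$ with $\|T\|_{L_2(\mu)\to L_2(\mu)}\le\lambda$, and expand the alternating product of block potentials $U_j=\diag(u_j)$ and transition matrices over $s\in\{0,1\}^{k-1}$ (Lemma~\ref{lem:holderapplicationrefine}, which already handles step-dependent potentials, so no time-inhomogeneity issue arises --- there is one chain step per block, as in your $\sum_j\ip{Y_j,w_j}$ formulation). Each term carries weight $\lambda^{|s|}(1-\lambda)^{k-1-|s|}$ times the cosine product over the surviving blocks $t(s)$; one integrates \emph{each term separately} via Claim~\ref{eq:imptv}, getting $C/\sqrt{m(|t(s)|+1)}$ (this is where each surviving block contributing $\approx m$ cosines, all with $v_i\ge1$, restores the $1/\sqrt{n}$ rate you were worried about), and only then averages over $s$, viewed as i.i.d.\ $\mathrm{Bernoulli}(\lambda)$ bits, by comparing the number of surviving blocks to a binomial $B(\Theta(k),(1-\lambda)^2)$ and applying the negative-moment bound of Claim~\ref{claim:combinarg}. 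This integrate-then-average exchange is exactly what yields the overall factor $(1-\lambda)^{-1}$ rather than a per-block loss, i.e.\ the crucial point you identified but left unverified. Without this (or an equivalent) argument the proof is incomplete.
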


One interpretation of Theorem~\ref{thm:prglo} is that one can obtain similar results as in the Littlewood-Offord problem in one dimension using much less randomness, and in particular, using $C_1\sqrt{n}$ bits of randomness rather than $n$.

This setting was also considered in~\cite{KKL17}, in which the authors were able to construct an explicit set of cardinality $n 2^{n^{c}}$, from which a random sample satisfies
\[
\Pr[f_1(Y_1)v_1+f_2(Y_2)v_2+\cdots +f_n(Y_n)v_n = x_0] \leq \frac{\log(n)^{C_1/c}}{\sqrt{n}}.
\]
for any constant $c$ bounded above by $1$.
Sampling from the set in Theorem~\ref{thm:prglo} guarantees a stronger bound on the probability that the sum lands in any interval, while requiring more randomness when $c < 1/2$.

\subsection{Future Work}

It would be interesting to remove the dependence on the dimension in Theorem~\ref{thm:highdim}, which does not appear in the tightest bounds for independent random variables.

The setting studied by S\'ark\"ozy and Szemeredi, in which the the $v_i$ are distinct positive integers and the random variables are independent, was the first in a series of work investigating under what conditions Eq.~\eqref{eq:main} can be bounded more strongly.
We call a set $Q \subseteq \R^d$ a generalized arithmetic progression (GAP) of rank $r$ if it can be expressed as
\[
Q = \{v_0+x_1v_1+x_2v_2+\cdots+x_rv_r \st x_i \in \Z, M_i \leq x_i \leq M_i'\}
\]
for some $v_0, \ldots, v_r \in \R^d$, $M_1, \ldots, M_r \in \Z$ and $M_1', \ldots, M_r' \in \Z$.
In a series of works starting with~\cite{TV09} and improved by~\cite{TV10, NV11}, it was shown that when $d=1$, if Eq.~\eqref{eq:main} is bounded above by $n^{-C}$ for all unit-balls $B$, then the set $\{v_1, \ldots, v_n\}$ must be mostly contained in some GAP of rank-$r$, where $r$ depends on $C$.
It would be interesting to see if such an analogue holds when the random variables are chosen from a Markov chain.

It would also be interesting to improve Theorem~\ref{thm:prglo} by constructing explicit sets of cardinality smaller than $2^{C_1\sqrt{n}}$ that achieve similar properties.


\section{Preliminaries}

Given vectors $v, \mu \in \R^{N}$ (typically $\mu$ will be a distribution over $[N]$), we define the $L_{p}(\mu)$-norm by
\[
\|v\|_{L_{p}(\mu)}^p = \sum_{i=1}^N |v_i|^p \mu_i.
\]
Additionally, we let the ${L_p(\mu)} \rightarrow {L_q(\mu)}$-operator norm of a matrix $A \in \R^{N \times N}$ be defined as 
\[\|A\|_{{L_p(\mu)} \rightarrow {L_q(\mu)}} = \max_{v: \|v\|_{L_p(\mu)} = 1} \|Av \|_{L_q(\mu)}.\]
Finally, we will use $\ell_p$ in place of $L_p(\mu)$ when $\mu$ is the vector whose entries are all $1$.
Note that in this case, $\mu$ is not a distribution.

For a vector $v$, we let $\diag(v)$ be the diagonal matrix where $\diag(v)_{i, i} = v_i$.

Let $A$ be a stochastic matrix, and let $\mu$ be a distribution for which $A$ is reversible, that is, $\mu_i A_{ij} = \mu_j A_{ji}$.
We let $(E_{\mu})_{ij} = \mu_j$ be the averaging operator on $L_{\infty}(\mu) \rightarrow L_{\infty}(\mu)$.
Note that $E_{\mu}$ is also stochastic and reversible on $\mu$.

\section{The Littlewood-Offord problem for independent random variables}

As warm up, we present the bound in the independent case for $1$-dimensional vectors, or scalars.
These calculations will be used later in the proofs of Theorems~\ref{thm:highdim},~\ref{thm:diff}, and~\ref{thm:prglo},.
This bound was first proved by Erd\H{o}s~\cite{E45} who used a clever combinatorial argument that applies Sperner's theorem.
The proof we present is in spirit, due to Hal\'asz~\cite{H87} and is based on techniques from Fourier analysis.

We start by presenting the following concentration inequality due to Ess\'een~\cite{E66}, which will allow us to upper-bound probabilities.
This inequality is in the spirit of Fourier inversion, but written in a way that can be more readily applied for our purposes.
\begin{theorem}[Ess\'een concentration inequality]\label{thm:esseen}
Let $X \in \R^d$ be a random variable taking a finite number of values.
For $R, \eps > 0$,
\[
\sup_{x_0 \in \R^d} \Pr\left[\|X-x_0\|_{\ell_2} \leq R\right] = O\left(\frac{R}{\sqrt{d}}+\frac{\sqrt{d}}{\eps}\right)^d\int_{\xi \in \R^d\st \|\xi\|_{\ell_2} \leq \eps} \left|\E[\exp(2\pi i\langle \xi, X\rangle)]\right|d\xi.
\]
\end{theorem}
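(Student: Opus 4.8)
The plan is to reduce the statement to a standard Fourier-inversion argument by convolving with a well-chosen smooth bump function whose Fourier transform is supported (or essentially supported) on the ball of radius $\eps$. Concretely, I would fix a nonnegative ``test'' function $\psi : \R^d \to \R$ that dominates the indicator of the ball $\{\|y\|_{\ell_2} \le R\}$ from above, i.e.\ $\psi(y) \ge \mathbf 1[\|y\|_{\ell_2} \le R]$ for all $y$, and whose Fourier transform $\widehat\psi$ is supported in $\{\|\xi\|_{\ell_2} \le \eps\}$. The canonical choice is to take $\psi$ proportional to $(\widehat{\chi} )^2$ where $\chi$ is the indicator of a ball of radius $\eps/2$; equivalently, $\psi$ is (a scaling of) the self-convolution of such an indicator, so that $\widehat\psi \ge 0$ and $\widehat\psi$ is supported in the ball of radius $\eps$. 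One then checks that, after normalizing, $\psi(y) \ge c$ on $\{\|y\|_{\ell_2}\le R\}$ provided $\eps \gtrsim \sqrt d / R$, and that $\psi(0) = \int \widehat\psi$; the ratio $\psi(0)/c$ is what produces the factor $\bigl(\tfrac{R}{\sqrt d} + \tfrac{\sqrt d}{\eps}\bigr)^d$ (the two terms come from comparing the mass of $\widehat\psi$ on a ball of radius $\eps$ against the lower bound $c$ on $\psi$ over a ball of radius $R$).

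With such a $\psi$ in hand, the core of the argument is short. For any $x_0$,
\[
\Pr[\|X - x_0\|_{\ell_2} \le R] \;\le\; \frac{1}{c}\,\E[\psi(X - x_0)]
\;=\; \frac{1}{c}\int_{\xi \in \R^d} \widehat\psi(\xi)\, e^{-2\pi i \langle \xi, x_0\rangle}\, \E[e^{2\pi i \langle \xi, X\rangle}]\, d\xi,
\]
where the equality is Fourier inversion applied to $\psi$ together with Fubini (valid since $X$ takes finitely many values, so the integrand is a finite sum of integrable functions). Taking absolute values, using $|e^{-2\pi i\langle \xi,x_0\rangle}| = 1$, $\widehat\psi \ge 0$, $\widehat\psi \le \widehat\psi(0) = \int\widehat\psi$ pointwise, and $\mathrm{supp}\,\widehat\psi \subseteq \{\|\xi\|_{\ell_2}\le\eps\}$ gives
\[
\Pr[\|X-x_0\|_{\ell_2}\le R] \;\le\; \frac{\widehat\psi(0)}{c}\int_{\|\xi\|_{\ell_2}\le\eps} \bigl|\E[e^{2\pi i\langle\xi,X\rangle}]\bigr|\, d\xi,
\]
and taking the supremum over $x_0$ on the left (the right-hand side is already independent of $x_0$) yields the claim once the constant $\widehat\psi(0)/c$ is bounded by $O\bigl(\tfrac{R}{\sqrt d} + \tfrac{\sqrt d}{\eps}\bigr)^d$.

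The main obstacle is the explicit construction of $\psi$ and the verification of the two quantitative facts about it: (i) that $\psi$, suitably normalized, is bounded below by an absolute constant on the ball of radius $R$, which requires controlling how the ``Fejér-type kernel'' $(\widehat\chi)^2$ decays — here the condition $R \gtrsim \sqrt d / \eps$ (equivalently the hypothesis $R \ge \tfrac{1}{C'\sqrt d}$ after choosing $\eps$ appropriately, matching the statement) is exactly what is needed so the first zero/decay scale of the kernel is past radius $R$; and (ii) pinning down the dependence of $\widehat\psi(0)/c$ on $d$ so that it comes out as the stated $d$-th power. Both are dimension-dependent computations with Bessel functions or with volumes of balls, and getting the exponent to be exactly $d$ (rather than, say, $Cd$) is the delicate point; I would handle this by working with the self-convolution normalization, where $\widehat\psi(0) = \|\chi\|_2^2 = \mathrm{vol}(\text{ball of radius }\eps/2)$ and the lower bound $c$ on $\psi$ over the ball of radius $R$ is $\mathrm{vol}(\text{ball of radius }\eps/2)$ times a factor coming from the overlap of two radius-$\eps/2$ balls whose centers differ by at most $R$, so that the ratio telescopes to a clean power. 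Everything else is routine.
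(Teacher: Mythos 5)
The paper itself does not prove Theorem~\ref{thm:esseen}: it is quoted as a known result of Ess\'een \cite{E66} (see also the treatment around Proposition 7.18 in \cite{TV06}), so your proposal has to stand on its own. Its skeleton is indeed the classical argument: majorize the indicator of the ball $B(x_0,R)$ by a nonnegative $\psi$ with $\mathrm{supp}\,\widehat\psi\subseteq\{\|\xi\|_{\ell_2}\le\eps\}$, apply Fourier inversion and Fubini, and bound the ratio $\|\widehat\psi\|_{\ell_\infty}/c$ where $\psi\ge c$ on the ball. That displayed chain of inequalities is fine. The gap is in the construction of $\psi$, and it is not ``routine''. With $\chi=\mathbf{1}_{B(0,\eps/2)}$, the kernel $\psi\propto(\widehat\chi)^2$ is a squared Bessel-type function: $\widehat\chi$ vanishes on spheres of radius comparable to $d/\eps$ (first zero of $J_{d/2}$), so $\psi$ has zeros inside $B(0,R)$ once $R\gtrsim d/\eps$, and no normalization can make it bounded below there; this is exactly the regime where the $(R/\sqrt d)^d$ term of the stated prefactor is supposed to take over, and the theorem (as used in the paper) has no restriction tying $R$ to $\eps$. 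Your parenthetical side condition relating $\eps$ and $R$, which has no counterpart in the statement, is a symptom of this. A second, related error: the final computation of $c$ as ``the overlap of two radius-$\eps/2$ balls whose centers differ by at most $R$'' conflates $\psi$ with $\widehat\psi$ --- that overlap equals $(\chi*\chi)(y)=\widehat\psi(y)$, not $\psi(y)$, and it is identically zero as soon as $R\ge\eps$ (likewise $\int\widehat\psi=\psi(0)$, not $\widehat\psi(0)$). So as written the quantitative heart of the proof does not go through.

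The standard repair keeps your framework but changes the kernel so that large $R$ is absorbed by a convolution in physical space rather than by a pointwise lower bound on a rigid kernel. Take $h(y)=\prod_{j=1}^d h_1(y_j)$, where $h_1\ge0$ is a fixed even kernel with finite variance whose Fourier transform is supported in $[-\eps/\sqrt d,\eps/\sqrt d]$ (e.g.\ the square of a Fej\'er kernel, rescaled); then $\widehat h\ge0$ is supported in $B(0,\eps)$, each coordinate variance is $O(d/\eps^2)$, so by Chebyshev at least half the mass of $h$ lies in $B(0,w)$ with $w=O(d/\eps)$. Now set $\psi=\mathbf{1}_{B(0,R+w)}*h$. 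Then $\psi\ge0$ everywhere, $\psi\ge\tfrac12\int h$ on $B(0,R)$, and $\widehat\psi=\widehat{\mathbf{1}_{B(0,R+w)}}\cdot\widehat h$ is still supported in $B(0,\eps)$ with $\|\widehat\psi\|_{\ell_\infty}\le\mathrm{vol}(B(0,R+w))\int h$. Feeding this into your inequality gives the prefactor $2\,\mathrm{vol}(B(0,R+w))=O\bigl(\tfrac{R}{\sqrt d}+\tfrac{\sqrt d}{\eps}\bigr)^d$, exactly as stated (a smoother radial $h$ even gives $w=O(\sqrt d/\eps)$). Note that a covering argument --- proving the small-$R$ case and tiling $B(x_0,R)$ by balls of radius $\sqrt d/\eps$ --- loses a factor $d^{d/2}$ and does not recover the stated bound, so the convolution with $\mathbf{1}_{B(0,R+w)}$, absent from your writeup, is the essential missing ingredient.
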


The following bound is implicit in the proof of Proposition 7.18 in~\cite{TV06} and will be used to further bound the quantities obtained from Theorem~\ref{thm:esseen}
\begin{claim}\label{eq:imptv}
Let $v_1, \ldots, v_k \in \R$ be such that $|v_j| \geq 1$ for all $j$.
Then
\[
\int_{-1}^{1} \left(\prod_{j \in k} \left|\cos(2\pi \xi v_j)\right|\right) d\xi
\leq
\frac{C}{\sqrt{|k|}},
\]
for some constant $C$.
\end{claim}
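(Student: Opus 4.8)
The plan is to dominate the product pointwise by a Gaussian-type weight in the distances to the integers and then integrate via a sublevel-set bound. Write $\langle t\rangle$ for the distance from $t\in\R$ to the nearest integer. The starting point is the elementary inequality $|\cos(\pi t)|\le e^{-2\langle t\rangle^2}$, which reduces to checking $\cos(\pi s)\le e^{-2s^2}$ on $[0,\tfrac12]$ (a one-line convexity computation, since $(\log\cos(\pi s)+2s^2)$ has nonnegative second derivative there) and then using periodicity and evenness. Applying it with $t=2v_j\xi$ yields
\[
\prod_{j=1}^{k}\bigl|\cos(2\pi\xi v_j)\bigr|\ \le\ e^{-2F(\xi)},\qquad F(\xi):=\sum_{j=1}^{k}\langle 2v_j\xi\rangle^2,
\]
so it suffices to show $\int_{-1}^{1}e^{-2F(\xi)}\,d\xi=O(1/\sqrt{k})$.

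The heart of the argument is the sublevel-set estimate
\[
\bigl|\{\xi\in[-1,1]:F(\xi)\le s\}\bigr|\ \le\ C_0\sqrt{s/k}\qquad(s\ge 0)
\]
for an absolute constant $C_0$. To prove it I would first record a one-variable bound: since $|v_j|\ge 1$, the map $\xi\mapsto 2v_j\xi$ stretches $[-1,1]$ onto an interval of length $4|v_j|\ge 4$, so for $0\le t\le\tfrac12$ the set $\{\xi\in[-1,1]:\langle 2v_j\xi\rangle\le t\}$ consists of intervals around the points of $\tfrac{1}{2v_j}\Z$ of total length at most $(4+2/|v_j|)t\le 6t$ (and for $t>\tfrac12$ the bound $6t\ge 2$ is automatic). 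Next, if $F(\xi)\le s$ then fewer than $k/2$ indices can satisfy $\langle 2v_j\xi\rangle^2>2s/k$, so the counting function $\phi(\xi):=\#\{j:\langle 2v_j\xi\rangle\le\sqrt{2s/k}\}$ is at least $k/2$ wherever $F\le s$. Since
\[
\int_{-1}^{1}\phi(\xi)\,d\xi=\sum_{j=1}^{k}\bigl|\{\xi\in[-1,1]:\langle 2v_j\xi\rangle\le\sqrt{2s/k}\}\bigr|\le 6k\sqrt{2s/k},
\]
Markov's inequality gives $|\{F\le s\}|\le|\{\phi\ge k/2\}|\le 12\sqrt{2s/k}$, which is the claimed bound (trivially true once the right-hand side exceeds $2$).

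Finally I would sum dyadically: on $\{m\le F<m+1\}$ we have $e^{-2F}\le e^{-2m}$, hence
\[
\int_{-1}^{1}e^{-2F(\xi)}\,d\xi\ \le\ \sum_{m\ge 0}e^{-2m}\,\bigl|\{\xi\in[-1,1]:F(\xi)<m+1\}\bigr|\ \le\ \frac{C_0}{\sqrt{k}}\sum_{m\ge 0}e^{-2m}\sqrt{m+1}\ =\ \frac{C}{\sqrt{k}},
\]
since the series converges. The main obstacle is the sublevel-set estimate, and inside it the step where the hypothesis $|v_j|\ge 1$ is genuinely used: it is precisely what forces each individual $\xi\mapsto\langle 2v_j\xi\rangle$ to oscillate enough across $[-1,1]$ for the one-variable measure bound to hold with a universal constant; without it a single small $v_j$ would keep its contribution to $F$ small on most of $[-1,1]$ and the whole estimate would collapse. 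The remaining work is just tracking constants, which I would not bother to optimize.
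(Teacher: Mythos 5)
Your proof is correct. The paper itself does not prove this claim --- it only records it as implicit in the proof of Proposition 7.18 of~\cite{TV06} --- and your argument (dominating $|\cos(\pi t)|$ by $e^{-2\langle t\rangle^2}$ with $\langle t\rangle$ the distance to the nearest integer, then running the sublevel-set/double-counting estimate for $F(\xi)=\sum_j \langle 2v_j\xi\rangle^2$, with $|v_j|\ge 1$ used exactly where it must be) is essentially the standard Hal\'asz-type argument underlying that reference, so it supplies a correct self-contained proof of the cited bound. One cosmetic fix: $\log\cos(\pi s)+2s^2$ has nonpositive, not nonnegative, second derivative on $[0,\tfrac12)$, since that derivative equals $4-\pi^2\sec^2(\pi s)\le 4-\pi^2<0$; concavity together with the vanishing of the function and its first derivative at $0$ still gives $\cos(\pi s)\le e^{-2s^2}$, or one can bypass calculus entirely via $\cos(\pi s)\le 1-2s^2\le e^{-2s^2}$ on $[0,\tfrac12]$.
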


We now prove the bound in the independent case.

\begin{theorem}\label{thm:indrad}
Let $v_1, \ldots, v_n \in \R$ be non-zero, and let $\eps_1, \ldots, \eps_n$ be independent random variables uniform over the set $\{-1, 1\}$.
Then for all $x_0 \in \R$,
\[
\Pr[|\eps_1 v_1 +\cdots+\eps_n v_n-x_0| \leq 1] \leq \frac{C}{\sqrt{n}}.
\]
for some constant $C$ independent of $n$.
\end{theorem}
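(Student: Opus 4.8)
The plan is to apply the Ess\'een concentration inequality (Theorem~\ref{thm:esseen}) in dimension $d=1$ with $X = \eps_1 v_1 + \cdots + \eps_n v_n$ and then bound the resulting Fourier integral using Claim~\ref{eq:imptv}. First I would choose the parameter $\eps$ in Theorem~\ref{thm:esseen} to be a suitable constant (say $\eps = 1$, or whatever makes the prefactor clean), so that the inequality reads
\[
\sup_{x_0 \in \R} \Pr[|X - x_0| \leq 1] = O(1) \int_{|\xi| \leq 1} \bigl|\E[\exp(2\pi i \xi X)]\bigr|\, d\xi.
\]

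Next I would compute the characteristic function explicitly. Since the $\eps_j$ are independent and uniform on $\{-1,1\}$, we have
\[
\E[\exp(2\pi i \xi X)] = \prod_{j=1}^n \E[\exp(2\pi i \xi \eps_j v_j)] = \prod_{j=1}^n \cos(2\pi \xi v_j),
\]
so $\bigl|\E[\exp(2\pi i \xi X)]\bigr| = \prod_{j=1}^n |\cos(2\pi \xi v_j)|$.

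The issue is that Claim~\ref{eq:imptv} requires $|v_j| \geq 1$, whereas here the $v_j$ are only assumed nonzero. To handle this I would rescale: the event $|X - x_0| \leq 1$ is contained in the event $|X - x_0| \leq 1/\min_j |v_j|$ after dividing through, or more cleanly, note that if we set $w_j = v_j / m$ where $m = \min_j |v_j|$, then $|w_j| \geq 1$ for all $j$, and $\Pr[|\sum \eps_j v_j - x_0| \leq 1] = \Pr[|\sum \eps_j w_j - x_0/m| \leq 1/m] \leq \Pr[|\sum \eps_j w_j - x_0/m| \leq 1]$ provided $m \geq 1$; if $m < 1$ one still gets a bound by taking the ball of radius $1/m \geq 1$, but then the radius is no longer $1$ and the constant in Claim~\ref{eq:imptv} handles only the $|k|^{-1/2}$ decay, not the radius — so I would instead invoke Theorem~\ref{thm:esseen} with $R = 1/m$ scaled appropriately, absorbing the radius into the $O(R/\sqrt{d})$ factor, and then apply Claim~\ref{eq:imptv} to the $w_j$ over the integration range $|\xi| \leq \eps$ chosen to match. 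Working this through, after substituting and applying Claim~\ref{eq:imptv} with $k = \{1, \ldots, n\}$, the Fourier integral is at most $C/\sqrt{n}$, and the constant prefactor from Ess\'een is absorbed into $C$, giving the claimed bound.

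The main obstacle is the bookkeeping around the normalization $|v_j| \geq 1$ versus merely nonzero, and correctly matching the integration range in Claim~\ref{eq:imptv} (which is $[-1,1]$) against the range $\{\|\xi\| \leq \eps\}$ in Ess\'een's inequality — one wants to pick $\eps$ and the rescaling so that these line up and the extra factors remain universal constants. Everything else is a direct substitution: independence turns the characteristic function into a product of cosines, and Claim~\ref{eq:imptv} delivers exactly the $n^{-1/2}$ decay. I expect no genuine difficulty beyond tracking constants; this is precisely the warm-up computation that the later theorems (especially Theorems~\ref{thm:highdim},~\ref{thm:diff}, and~\ref{thm:prglo}) will reuse with the product of cosines replaced by an appropriate Markov-chain analogue controlled by $\lambda$.
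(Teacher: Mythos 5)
Your proposal follows the paper's proof exactly in its core: apply Theorem~\ref{thm:esseen} with $d=1$ and a constant choice of the parameter, use independence and uniformity of the $\eps_j$ to write $\left|\E[\exp(2\pi i\xi X)]\right| = \prod_{j=1}^n|\cos(2\pi\xi v_j)|$, and bound the integral over $[-1,1]$ by $C/\sqrt n$ via Claim~\ref{eq:imptv}. That part is correct and is precisely the paper's warm-up computation.

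The only place you depart from the paper is the rescaling digression for the case $m=\min_j|v_j|<1$, and that patch does not work: after passing to $w_j=v_j/m$ the event becomes $\left|\sum_j \eps_j w_j - x_0/m\right|\le 1/m$, and feeding radius $R=1/m$ into Theorem~\ref{thm:esseen} leaves a prefactor of order $1/m$, which is not a universal constant; you would get $C/(m\sqrt n)$, not $C/\sqrt n$. No argument can close this gap, because the statement with merely nonzero $v_i$ and radius $1$ is false as written: take $v_1=\cdots=v_n=1/n$ and $x_0=0$, so that $\left|\sum_j\eps_j v_j\right|\le 1$ holds with probability $1$. The hypothesis should be read as $|v_i|\ge 1$, consistent with the introduction, with Claim~\ref{eq:imptv}, and with the paper's own proof, which applies Claim~\ref{eq:imptv} directly and makes no attempt to handle $|v_i|<1$; the word ``non-zero'' in the theorem statement is a slip. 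Under the corrected hypothesis your first two paragraphs plus a direct application of Claim~\ref{eq:imptv} constitute the paper's proof, and no rescaling is needed.
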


\begin{proof}
By Theorem~\ref{thm:esseen}, the left-hand side can be bounded above by
\begin{align}
C_1 \int_{-1}^{1} \left|\E[\exp(2\pi i \xi (\eps_1 v_1 +\cdots+\eps_n v_n))]\right|d\xi
&= C_1 \int_{-1}^{1} \prod_{j=1}^n \left|\E[\exp(2\pi i  \xi \eps_j v_j )]\right|d\xi \nonumber \\
&= C_1 \int_{-1}^{1} \prod_{j=1}^n \left|\cos(2\pi  \xi v_j )\right|d\xi \label{eq:fincos}\\
&\leq \frac{C_2}{\sqrt{n}} \nonumber
\end{align}
for some constants $C_1$ and $C_2$.
The first equality follows from the independence of the $\eps_j$, the next equality follows from the fact that $\eps_j$ is uniform over $\{-1, 1\}$ for all $j$, and the subsequent inequality follows from Claim~\ref{eq:imptv}.
\end{proof}

\section{The Littlewood-Offord Problem for Random Variables from a Markov chain}\label{sec:expbasic}

Now we consider the case that $\eps_1, \ldots, \eps_n$ are obtained from a Markov chain.
The proof follows very closely the proof for independent random variables in Proposition 7.18 in~\cite{TV06} which itself is due to Hal\'asz~\cite{H87}.

In order to handle the extra dependencies from the Markov chain, we will use the following technical lemma, which is a straightforward adaptation of a Lemma from~\cite{NaorRR17}.
We include a proof in Appendix~\ref{app:proof}.

\begin{lemma}\label{lem:holderapplicationrefine}
Let $k \ge 1$ be an integer, $u_1, \ldots, u_{k+1} \in \C^{N}$ be $N$-dimensional vectors such that $\|u_i\|_{L_{\infty}(\mu)} \leq 1$, $U_i = \diag(u_i)$,
 and $T_1, \ldots, T_k \in \R^{N \times N}$.
For $s \in \{0, 1\}^k$, let $\overline{s} := (0,s,0) \in \{0, 1\}^{k+2}$ and define $t(s) \subseteq [n]$ to be $t(s) := \{ i  \ : \ \overline{s}_i = \overline{s}_{i-1} = 0\}$.
Then,
\begin{align}\label{eq:splittingj}
\left\| U_1 (T_1+(1-\lambda) E_{\mu}) U_2 (T_2+(1-\lambda) E_{\mu}) U_3\cdots U_k (T_k+(1-\lambda) E_{\mu}) U_{k+1} \mathbf{1} \right\|_{L_1(\mu)} \nonumber
\leq \\
 \sum_{s \in \{0, 1\}^k}\left(\prod_{j: s_j = 1}\|T_j\|_{L_2(\mu) \rightarrow L_2(\mu)}\right)\left( \prod_{j: s_j = 0} (1-\lambda) \right)\left(\prod_{j \in t(s)} \left|\langle u_j, \mu \rangle\right|\right)\; .
\end{align}
\end{lemma}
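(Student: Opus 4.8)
The plan is to expand $\prod_{i=1}^{k}(T_i+(1-\lambda)E_\mu)$ by multilinearity into $2^k$ terms and to exploit the rank-one structure $E_\mu=\mathbf{1}\mu^{T}$. Three elementary facts drive the estimate: (i) since $\mu$ is a probability measure, $\|v\|_{L_1(\mu)}\le\|v\|_{L_2(\mu)}$ for every $v\in\C^{N}$; (ii) a diagonal matrix $U=\diag(u)$ with $\|u\|_{L_\infty(\mu)}\le 1$ satisfies $\|U\|_{L_2(\mu)\rightarrow L_2(\mu)}\le 1$ and $\|u\|_{L_2(\mu)}\le 1$; and (iii) $E_\mu w=\langle w,\mu\rangle\mathbf{1}$, while $\mu^{T}\diag(u)=(\mu\odot u)^{T}$. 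After expanding the product (the $i$-th factor being $T_i$ when $s_i=1$ and $E_\mu$ when $s_i=0$; write $\widetilde M_i=\widetilde M_i(s)$ for it), applying the triangle inequality to $\|\cdot\|_{L_1(\mu)}$, and pulling the scalar $\prod_{j:\,s_j=0}(1-\lambda)$ out of the $s$-term, the claim reduces to proving that for each fixed $s$,
\[
\bigl\|\,U_1\widetilde M_1 U_2\widetilde M_2\cdots U_k\widetilde M_k\,U_{k+1}\mathbf{1}\,\bigr\|_{L_1(\mu)}\ \le\ \Bigl(\prod_{j:\,s_j=1}\|T_j\|_{L_2(\mu)\rightarrow L_2(\mu)}\Bigr)\Bigl(\prod_{j\in t(s)}|\langle u_j,\mu\rangle|\Bigr).
\]

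For a fixed $s$, let $j_1<\cdots<j_m$ be the positions with $s_{j_\ell}=0$. Substituting $E_\mu=\mathbf{1}\mu^{T}$ at each of these positions factors the operator as $B_0\,(\mathbf{1}\mu^{T})\,B_1\,(\mathbf{1}\mu^{T})\cdots(\mathbf{1}\mu^{T})\,B_m$, where each block $B_\ell=U_{j_\ell+1}T_{j_\ell+1}U_{j_\ell+2}\cdots T_{j_{\ell+1}-1}U_{j_{\ell+1}}$ is an alternating product of $U$'s and $T$'s whose intervening factors $\widetilde M_i$ are all equal to $T_i$ (with the evident conventions defining $B_0$ and $B_m$ at the two ends). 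Applying this to $\mathbf{1}$ and reading from the right, each $\mathbf{1}\mu^{T}$ collapses its tail to a scalar, so the vector is $(c_1\cdots c_m)\,B_0\mathbf{1}$ with $c_\ell:=\mu^{T}B_\ell\mathbf{1}$, and its $L_1(\mu)$-norm equals $|c_1\cdots c_m|\cdot\|B_0\mathbf{1}\|_{L_1(\mu)}$. I would then bound the factors using (i)--(iii): $\|B_0\mathbf{1}\|_{L_1(\mu)}\le\|B_0\mathbf{1}\|_{L_2(\mu)}$ is at most the product of the $\|T_i\|_{L_2(\mu)\rightarrow L_2(\mu)}$ occurring in $B_0$; if $B_\ell$ contains at least one $T_i$, then writing $c_\ell=\langle T_{j_\ell+1}\cdots T_{j_{\ell+1}-1}\,u_{j_{\ell+1}},\,\overline{u_{j_\ell+1}}\rangle_{L_2(\mu)}$ and applying Cauchy--Schwarz with $\|u\|_{L_2(\mu)}\le 1$ gives $|c_\ell|\le\prod_{i\in B_\ell}\|T_i\|_{L_2(\mu)\rightarrow L_2(\mu)}$; and if $B_\ell$ is a single factor $U_j$ — which occurs exactly when $j-1$ and $j$ are consecutive zeros, i.e.\ $j\in t(s)$ — then $c_\ell=\mu^{T}U_j\mathbf{1}=\langle u_j,\mu\rangle$. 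Multiplying these out yields the displayed bound, and summing over $s$ completes the proof.

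The one substantive point is the bookkeeping in this last step: one must verify that each $i$ with $s_i=1$ contributes its operator norm exactly once, and that the ``single $U$'' blocks are in bijection with $t(s)$, so that each $j\in t(s)$ contributes exactly one factor $|\langle u_j,\mu\rangle|$. The prepended and appended zeros in $\overline{s}=(0,s,0)$ are precisely the device that makes the leftmost and rightmost blocks count correctly when $s_1=0$ or $s_k=0$; the only degenerate case is the left endpoint, where $B_0=U_1$ forces the weaker $\|B_0\mathbf{1}\|_{L_1(\mu)}=\|u_1\|_{L_1(\mu)}\le 1$, which is harmless since in the intended application $u_1=\mathbf{1}$ and hence $\langle u_1,\mu\rangle=1$. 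Every other inequality used is just Cauchy--Schwarz and submultiplicativity of operator norms.
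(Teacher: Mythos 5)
Your argument is essentially the paper's own proof in different packaging: you expand the product multilinearly, split each term at the chosen $E_\mu$'s (your rank-one factorization $E_\mu=\mathbf{1}\mu^{T}$ plays the role of the paper's auxiliary claims $\|R_1E_\mu R_2\cdots E_\mu R_m\mathbf{1}\|_{L_1(\mu)}\le\prod_i\|R_i\mathbf{1}\|_{L_1(\mu)}$ and $E_\mu\diag(u)E_\mu=\langle u,\mu\rangle E_\mu$), bound the blocks containing at least one $T_i$ via $L_1(\mu)\le L_2(\mu)$ and submultiplicativity, and read off $\langle u_j,\mu\rangle$ exactly at the consecutive-zero positions. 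Your bookkeeping (each $j$ with $s_j=1$ charged once, single-$U$ blocks in bijection with $t(s)$ up to the endpoints) is correct.

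The left-endpoint caveat you flag is, however, more than a cosmetic weakening: the inequality as literally stated is false when $1\in t(s)$ is charged the factor $|\langle u_1,\mu\rangle|$. Take $k=1$, $N=2$, $\mu=(1/2,1/2)$, $\lambda=0$, $T_1=0$, $u_1=(1,-1)$, $u_2=\mathbf{1}$: the left-hand side is $\|U_1E_\mu\mathbf{1}\|_{L_1(\mu)}=\|u_1\|_{L_1(\mu)}=1$, while the right-hand side is $\|T_1\|_{L_2(\mu)\rightarrow L_2(\mu)}+|\langle u_1,\mu\rangle|\,|\langle u_2,\mu\rangle|=0$. What your proof establishes is the corrected version in which the factor for $j=1$ is $\|u_1\|_{L_1(\mu)}\le 1$ (equivalently, the index $1$ is dropped from $t(s)$); the paper's own proof contains the same slip, since it invokes $E_\mu\diag(u)E_\mu=\langle u,\mu\rangle E_\mu$ at $j=1$, where no $E_\mu$ (or leading $\mu^{T}$) sits to the left of $U_1$ and the $L_1(\mu)$-norm does not supply one. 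One correction to your closing remark: in the paper's applications $u_1$ is not $\mathbf{1}$ but the phase vector $u_1(y)=\exp(2\pi i\xi f_1(y)v_1)$; the weakened endpoint is nevertheless harmless there because the downstream arguments only exploit the interior indices of $t(s)$ (through $t'(s)$ and $r(s)$) and bound endpoint cosine factors by $1$. Alternatively, the stated right-hand side is a correct bound for $|\mu^{T}U_1(T_1+(1-\lambda)E_\mu)U_2\cdots U_{k+1}\mathbf{1}|$, since a leading $\mu^{T}$ does absorb $U_1E_\mu$ into $\langle u_1,\mu\rangle\mu^{T}$, and that scalar quantity is all the applications actually need.
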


Before proving Theorem~\ref{thm:highdim}, we first prove the following that will allow us to upper-bound negative moments of binomial random variables.

\begin{claim}\label{claim:combinarg}
Let $x = B(n, p)$ be a binomial random variable with $n$ trials, each with success probability $p > 0$.
Then for all positive integers $d$,
\[
\E\left[\frac{1}{(x+1)^d}\right] \leq \frac{d^{d}}{n^dp^d}.
\]
\end{claim}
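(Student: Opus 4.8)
The plan is to bound $\E[1/(x+1)^d]$ by a two-stage argument: first reduce the problem to controlling $\E[1/(x+1)^d]$ on the event that $x$ is close to its mean, and then handle the (exponentially unlikely) event that $x$ is very small. However, a cleaner route — and the one I would actually pursue — is to estimate the negative moment directly through the inequality
\[
\frac{1}{(x+1)^d} \leq \frac{d^d}{(x+1)(x+2)\cdots(x+d)},
\]
valid for every nonnegative integer $x$ since $(x+j) \geq (x+1)$ for $j \geq 1$ forces $(x+1)^d \leq (x+1)\cdots(x+d)$, while the factor $d^d$ is a slack we carry for safety (in fact one does not even need it here, but it does no harm and matches the claimed bound). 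The point of rewriting the denominator as a rising factorial is that $1/\bigl((x+1)\cdots(x+d)\bigr)$ has an exact closed form under the binomial expectation.

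The key computation is the identity, for $x = B(n,p)$,
\[
\E\!\left[\frac{1}{(x+1)(x+2)\cdots(x+d)}\right] = \frac{1}{(n+1)(n+2)\cdots(n+d)\, p^d}\,\Pr[\,x' \geq d\,],
\]
where $x' = B(n+d, p)$; this is the standard "factorial-moment shift" for the binomial, obtained by writing $\binom{n}{k}\big/\bigl((k+1)\cdots(k+d)\bigr) = \binom{n+d}{k+d}\big/\bigl((n+1)\cdots(n+d)\bigr)$ and re-indexing the sum $\sum_k \binom{n}{k}p^k(1-p)^{n-k}/\prod_{j=1}^d(k+j)$. Since $\Pr[x' \geq d] \leq 1$ and $(n+1)(n+2)\cdots(n+d) \geq n^d$, we get
\[
\E\!\left[\frac{1}{(x+1)(x+2)\cdots(x+d)}\right] \leq \frac{1}{n^d p^d},
\]
and combining with the pointwise bound above yields $\E[1/(x+1)^d] \leq d^d/(n^d p^d)$, as desired.

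The only step requiring care is the algebraic re-indexing in the identity above — one must check the endpoints of the summation (the terms $k = 0, \ldots, d-1$ contribute to $\Pr[x'\geq d]$ being strictly less than $1$, which is why the inequality, not equality, survives) and that the binomial coefficient manipulation $\binom{n}{k}/\prod_{j=1}^d (k+j) = \binom{n+d}{k+d}/\prod_{j=1}^d (n+j)$ is correct, which follows by expanding both sides as ratios of factorials. This is routine but is the one place an off-by-one error could creep in; everything else is a one-line estimate. I do not anticipate any genuine obstacle, as negative moments of binomials are classical; the main obstacle, such as it is, is simply presenting the combinatorial identity cleanly.
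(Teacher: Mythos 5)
Your overall route is the same as the paper's: bound $1/(x+1)^d$ pointwise by $d^d\, x!/(x+d)!$ (you write the denominator as the rising factorial $(x+1)\cdots(x+d)$), then evaluate the resulting expectation via the re-indexing $\binom{n}{k}\big/\prod_{j=1}^d(k+j)=\binom{n+d}{k+d}\big/\prod_{j=1}^d(n+j)$, bound the shifted tail sum by $1$, and finish with $(n+1)(n+2)\cdots(n+d)\geq n^d$. This middle computation is exactly the display in the paper's own proof, so there is no difference in method.

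The one genuine problem is your justification of the pointwise bound, which is backwards. From $(x+j)\geq(x+1)$ you correctly deduce $(x+1)^d\leq(x+1)(x+2)\cdots(x+d)$, but taking reciprocals this yields $1/(x+1)^d\geq 1/\bigl((x+1)\cdots(x+d)\bigr)$, the opposite of the comparison you need; and your parenthetical claim that the factor $d^d$ is unnecessary is false. Indeed $\E\bigl[1/(x+1)^d\bigr]\geq\Pr[x=0]=(1-p)^n$, which for fixed $n,p$ with $np>1$ exceeds $(np)^{-d}$ once $d$ is large, so the bound $\E\bigl[1/(x+1)^d\bigr]\leq (np)^{-d}$ cannot hold in general---the $d^d$ is doing real work. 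The inequality you actually need, $(x+1)(x+2)\cdots(x+d)\leq d^d(x+1)^d$, is true, but for the reason the paper gives: each factor satisfies $x+j\leq x+d\leq d(x+1)$ for $1\leq j\leq d$ and nonnegative integer $x$. With that one-line correction the rest of your argument is sound and coincides with the paper's proof.
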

\begin{proof}
Note that because $d(i+1) \geq i+d$ for all non-negative $i$, the right-hand side is bounded above by $d^d\E\left[\frac{x!}{(x+d)!}\right]$, where the term inside the expected value can be written as
\begin{align*}
\sum_{i=0}^{n} \binom{n}{i}p^{i}(1-p)^{n-i}\frac{i!}{(i+d)!} &=
\sum_{i=0}^{n} \frac{n!}{(n-i)!(i+d)!}p^{i}(1-p)^{n-i} \\
&=
\sum_{i=0}^{n} \binom{n+d}{i+d}p^{i+d}(1-p)^{n-i}\frac{n!}{(n+d)!p^d} \\
&\leq
\frac{n!}{(n+d)!p^d}.
\end{align*}
The claim follows by noting that $n \leq n+i$ for $1 \leq i \leq d$.
\end{proof}

We start by considering the case of $1$-dimensional vectors, or scalars.
We also consider the case in which at most one-half of the $v_i$ have length less than $1$.
This will allow us to generalize to higher dimensions.
We note that in the case of independent random variables the corresponding statement follows from the usual Littlewood-Offord problem, by conditioning on the $\eps_i$ such that $|v_i| < 1$, for just an increase in the constant factor in the bound.
\begin{lemma}\label{thm:equal}
Assume the setting of~\ref{setting}.
Then for every $v_1, \ldots, v_n \in \R$ such that $|\{i: |v_i| \geq 1\}| \geq n/2$ and $x_0 \in \R$,
\begin{equation*}
\Pr[|f_1(Y_1)v_1+f_2(Y_2)v_2+\cdots +f_n(Y_n)v_n-x_0| \leq 1] \leq \frac{C}{(1-\lambda)\sqrt{n}}.
\end{equation*}
for some universal constant $C$.
\end{lemma}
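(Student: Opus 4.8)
The plan is to mimic the Fourier-analytic proof of Theorem~\ref{thm:indrad}, applying the Ess\'een inequality (Theorem~\ref{thm:esseen}) in dimension $d=1$ with some $\eps$ to be chosen, so that the left-hand side is bounded by a constant times $\eps^{-1}\int_{-\eps}^{\eps}\left|\E\left[\exp\left(2\pi i\xi\sum_j f_j(Y_j)v_j\right)\right]\right|d\xi$ (up to translating by $x_0$, which does not affect the modulus). The point is that for a Markov chain this characteristic function no longer factors, so I would write $\E\left[\exp\left(2\pi i\xi\sum_j f_j(Y_j)v_j\right)\right]$ as $\mathbf{1}^\top E_\mu U_1 A U_2 A\cdots A U_n\mathbf{1}$ where $U_j=\diag(\exp(2\pi i\xi v_j f_j(\cdot)))$ is the diagonal matrix of the per-step phases; equivalently one expresses it as an $L_1(\mu)$-norm of a product of the form appearing in Lemma~\ref{lem:holderapplicationrefine}. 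Substituting $A = (A-(1-\lambda)E_\mu)+(1-\lambda)E_\mu$ and setting $T_j := A-(1-\lambda)E_\mu$ (so that $\|T_j\|_{L_2(\mu)\to L_2(\mu)}\le \|A-E_\mu\|+\lambda\|E_\mu\|\le 2\lambda\le 2$, or more carefully one bounds it by $1$ after a little thought, but any absolute constant suffices), Lemma~\ref{lem:holderapplicationrefine} gives a sum over $s\in\{0,1\}^n$ of terms $\left(\prod_{j:s_j=1}\|T_j\|\right)\left(\prod_{j:s_j=0}(1-\lambda)\right)\prod_{j\in t(s)}|\langle u_j,\mu\rangle|$.

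The key observation is that $\langle u_j,\mu\rangle = \E_\mu[\exp(2\pi i\xi v_j f_j(Y))] = \cos(2\pi\xi v_j)$, exactly as in Eq.~\eqref{eq:fincos}, using that $\E[f_j(Y_j)]=0$ and $f_j$ is $\{-1,1\}$-valued so that $Y_j$ is equally likely to give $f_j=\pm1$. Thus for a fixed $s$ the factor $\prod_{j\in t(s)}|\cos(2\pi\xi v_j)|$ is a product of cosines over the index set $t(s)$. I would then integrate term-by-term over $\xi\in[-1,1]$ (rescaling to $[-\eps,\eps]$ at the end; in fact taking $\eps=1$ in Theorem~\ref{thm:esseen} and absorbing constants is cleanest here). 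For those $j\in t(s)$ with $|v_j|\ge1$, Claim~\ref{eq:imptv} bounds $\int_{-1}^1\prod_{j\in t(s),|v_j|\ge1}|\cos(2\pi\xi v_j)|\,d\xi\le C/\sqrt{|\{j\in t(s):|v_j|\ge1\}|}$, while the cosines with $|v_j|<1$ are bounded crudely by $1$. Since by hypothesis at least $n/2$ of the indices have $|v_j|\ge1$, and $t(s)$ omits roughly one index per "$1$" in $s$ plus its successor, one has $|\{j\in t(s):|v_j|\ge1\}|\ge n/2 - 2\cdot(\text{number of }1\text{'s in }s)$, and whenever the number of $1$'s in $s$ is at most $n/8$ this is at least $n/4$.

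Putting it together, the bound becomes $C\sum_{s}\left(\prod_{j:s_j=1}\|T_j\|\right)(1-\lambda)^{\#\{j:s_j=0\}}\cdot\frac{1}{\sqrt{\max(1,\,n/4 - 2\,\#\{j:s_j=1\})}}$. Grouping the sum by $m=\#\{j:s_j=1\}$, there are $\binom{n}{m}$ choices of $s$, each contributing roughly $(\text{const})^m(1-\lambda)^{n-m}/\sqrt{n - 8m}$ for $m\le n/8$ (and for the $s$ with many $1$'s we fall back on the trivial bound $\int_{-1}^1 1\,d\xi = 2$, but those are suppressed by $(1-\lambda)^{n-m}$ combined with the exponential decay from $(1-\lambda)^{n-m}\binom{n}{m}$ being small when $\lambda$ is not tiny — this regime needs a separate crude estimate). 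The cleanest way to handle the sum is to recognize $\sum_s \prod_{j:s_j=1}\|T_j\| (1-\lambda)^{\#\{j:s_j=0\}}$ as a telescoping product $\prod_j(\|T_j\|+(1-\lambda))$ and to interleave the $1/\sqrt{\cdot}$ factor by a convexity/Chernoff argument showing that the "effective length" $|t(s)\cap\{|v_j|\ge1\}|$ is $\Omega((1-\lambda)n)$ on average. Concretely I expect the final estimate to take the shape $C\cdot\E\left[1/\sqrt{1+Z}\,\right]$ where $Z$ is (essentially) a binomial $B(n',q)$ with $q$ proportional to $1-\lambda$ counting the "non-$T$" steps landing on large-$v_j$ indices, and then Claim~\ref{claim:combinarg} with $d=1$ (or rather its square-root analogue, obtained by Jensen from the $d=1$ case, $\E[(1+Z)^{-1/2}]\le\E[(1+Z)^{-1}]^{1/2}\le (nq)^{-1/2}$) delivers exactly $\frac{C}{(1-\lambda)\sqrt{n}}$.

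\textbf{Main obstacle.} The combinatorial bookkeeping in the last step is where the work lies: one must correctly track how the index set $t(s)$ interacts with the set $\{i:|v_i|\ge1\}$, handle the $s$ with too many $1$'s (where the cosine integral is only bounded by a constant and one relies on $\binom{n}{m}(1-\lambda)^{n-m}$-type decay), and then reorganize the double sum over $s$ and the square-root factor into something Claim~\ref{claim:combinarg} applies to. The Fourier setup and the reduction via Lemma~\ref{lem:holderapplicationrefine} are essentially mechanical once one notices $\langle u_j,\mu\rangle=\cos(2\pi\xi v_j)$; the subtlety is purely in extracting the $(1-\lambda)^{-1}$ factor cleanly from the weighted sum, which is exactly the point where the spectral gap must enter.
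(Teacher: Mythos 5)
Your outline follows the paper's proof almost step for step (Ess\'een, the decomposition $A=T_j+(1-\lambda)E_\mu$ fed into Lemma~\ref{lem:holderapplicationrefine}, the evaluation $\langle u_j,\mu\rangle=\cos(2\pi\xi v_j)$, Claim~\ref{eq:imptv} on the indices of $t(s)$ with $|v_j|\ge 1$, then a binomial-domination argument closed by Jensen and Claim~\ref{claim:combinarg}), but it contains one genuine error at the crux of the argument: the parenthetical claim that ``any absolute constant suffices'' as a bound on $\|T_j\|_{L_2(\mu)\to L_2(\mu)}$. It does not. The sum over $s$ produced by Lemma~\ref{lem:holderapplicationrefine} has total weight $\prod_j\bigl(\|T_j\|+(1-\lambda)\bigr)$, and the whole endgame --- interpreting the weights as a product probability measure on $\{0,1\}^{n-1}$, viewing $|t'(s)|$ as stochastically dominating a binomial with success probability $(1-\lambda)^2$, and applying Claim~\ref{claim:combinarg} --- only makes sense if each factor is at most $1$, i.e.\ if $\|T_j\|\le\lambda$. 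With your triangle-inequality bound $2\lambda$ (or even with a bound of $1$) the total weight is $(1+\lambda)^{n-1}$ (resp.\ $(2-\lambda)^{n-1}$), exponentially large in $n$ for fixed $\lambda\in(0,1)$, and the factor $1/\sqrt{|t'(s)|+1}\ge 1/\sqrt{n}$ cannot compensate; your fallback of grouping by the number of ones $m$ and invoking decay of $\binom{n}{m}(1-\lambda)^{n-m}$ does not repair this, since already the small-$m$ range carries exponential total mass once each ``$1$'' costs a constant rather than $\lambda$. The missing ingredient is precisely why the paper subtracts $(1-\lambda)E_\mu$ rather than $E_\mu$: by reversibility $A$ is self-adjoint on $L_2(\mu)$, it acts as the identity on constants and with norm at most $\lambda$ on the orthogonal complement of $\mathbf 1$, while $E_\mu$ is the projection onto constants; hence $T_j=A-(1-\lambda)E_\mu$ acts as multiplication by $\lambda$ on constants and as $A$ on $\mathbf 1^{\perp}$, so $\|T_j\|_{L_2(\mu)\to L_2(\mu)}\le\lambda$ exactly, and $\lambda+(1-\lambda)=1$ turns the weights into a genuine probability distribution.

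Once that bound is in place, the rest of your sketch goes through essentially as in the paper, and in fact more simply than you anticipate: no separate ``many ones'' regime is needed, because the estimate $C/\sqrt{|t'(s)|+1}$ is used uniformly (the $+1$ covers $|t'(s)|=0$), and the weighted sum is exactly $\E\bigl[C/\sqrt{r(\mathbf s)+1}\bigr]$ for the product measure with parameter $\lambda$. One then dominates $r(\mathbf s)+1$ by $B(\lfloor n/4\rfloor-1,(1-\lambda)^2)+1$ --- note the success probability is $(1-\lambda)^2$, not proportional to $1-\lambda$ as you wrote, since an index survives into $t(s)$ only if two consecutive coordinates of $s$ vanish; this is also what produces the stated $(1-\lambda)^{-1}$ (rather than $(1-\lambda)^{-1/2}$) factor after Jensen and Claim~\ref{claim:combinarg}.
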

\begin{proof}
By Theorem~\ref{thm:esseen},
\begin{multline}\label{eq:mainstart}
\Pr[|f_1(Y_1)v_1+\cdots +f_n(Y_n)v_n-x_0| \leq 1] \leq 
\\
C_1\int_{-1}^{1} \left|\E[\exp(2\pi i\xi(f_1(Y_1)v_1+\cdots +f_n(Y_n)v_n))]\right|d\xi
\end{multline}
for some constant $C_1$.
Note that
\begin{equation}\label{eq:esseenapp}
\E[\exp(2\pi i\xi(f_1(Y_1)v_1+\cdots +f_n(Y_n)v_n))] 
= 
\E\left[\prod_{j=1}^n \exp(2\pi i \xi f_j(Y_j)v_i)\right].
\end{equation}
Let $T_j = A-(1-\lambda)E_{\mu}$, let $u_j$ be the vector defined by $u_j(y) = \exp(2\pi i\xi f_j(y)v_j)$ for $y \in [N]$, and let $U_j = \diag(u_j)$.
For $s \in \{0, 1\}^{n-1}$, let $t(s)$ be the set of indices $j$ such that $s_{j-1} = s_{j} = 0$, and also includes $1$ if $s_1 = 0$ and includes $n$ if $s_{n-1} = 0$.
Then the right-hand side of Eq.~\eqref{eq:esseenapp} is bounded above by
\begin{multline*}\label{eq:finb}
\left\| U_1 (T_1+(1-\lambda) E_{\mu}) U_2 (T_2+(1-\lambda) E_{\mu}) U_3\cdots U_{n-1} (T_{n-1}+(1-\lambda) E_{\mu}) U_{n} \mathbf{1} \right\|_{L_1(\mu)} \leq
\\
\sum_{s \in \{0, 1\}^{n-1}}\left(\prod_{j: s_j = 1}\lambda \right)\left( \prod_{j: s_j = 0} (1-\lambda) \right)\left(\prod_{j \in t(s)} \left|\cos(2\pi \xi v_j)\right|\right),
\end{multline*}
where the inequality follows  by Lemma~\ref{lem:holderapplicationrefine} and evaluating $|\langle \mu, u \rangle|$.

Let $t'(s)$ be the set of indices $j \in t(s)$ such that $|v_j|$ is greater than $1$.
When $|t'(s)| = 0$, the corresponding product disappears.
When $|t'(s)| > 0$, we can apply Claim~\ref{eq:imptv}.
Thus, the right-hand side of Eq.~\eqref{eq:mainstart} can be bounded above by 
\begin{equation}\label{eq:intme}
C_1\sum_{s \in \{0, 1\}^{n-1}}\left(\prod_{j: s_j = 1}\lambda \right)\left( \prod_{j: s_j = 0} (1-\lambda) \right) \frac{C_2}{\sqrt{|t'(s)|+1}}.
\end{equation}

Let $r: \{0, 1\}^{n-1} \rightarrow [n-1]$ be defined as
\[
r = \left|\{j: s_j = s_{j+1} = 0 \text{ and } |v_j| \geq 1\}\right|,
\]
so that $r(s) \leq |t'(s)|$ for all $s \in \{0, 1\}^{n-1}$.
Let $\mathbf{s}$ be a random vector from $\{0, 1\}^{n-1}$ so that for each $s \in \{0, 1\}^{n-1}$
\[
\Pr[\mathbf{s} = s] = \left(\prod_{j: s_j = 1}\lambda\right)\left( \prod_{j: s_j = 0} (1-\lambda) \right).
\] 
By the definition of $r$ and $\mathbf{s}$, the right-hand side of Eq.~\eqref{eq:intme} is bounded above by,
\begin{equation*}\label{eq:startofcomb}
C_1 \E\left[\frac{C_2}{\sqrt{r(\mathbf{s})+1}}\right].
\end{equation*}

We conclude with the following argument.
Let $r' = B(\lfloor n/4\rfloor-1, (1-\lambda)^2)+1$ where $B(n, p)$ denotes a binomial random variable with $n$ trials, each with success probability $p$.
It follows that $r'$ is dominated by $r(\mathbf{s})+1$, and thus
\begin{equation}\label{eq:finalcomp}
\E\left[\frac{C}{\sqrt{r(\mathbf{s})+1}}\right] \leq  \E\left[\frac{C}{\sqrt{r'}}\right] \leq \left(\E\left[\frac{C^2}{r'}\right]\right)^{1/2},
\end{equation}
where the second inequality follows by Jensen's inequality.
Finally, by Claim~\ref{claim:combinarg}, the right-hand side of Eq.~\eqref{eq:finalcomp} is bounded above by $C\left((1-\lambda)\sqrt{\lfloor n/4\rfloor}\right)^{-1}$ as desired.
\end{proof}

Before proving Theorem~\ref{thm:highdim}, we prove the following bound on random unit vectors.

\begin{claim}\label{claim:forrandomrot}
Let $v \in \R^d$ be a random unit vector uniform over the $d-1$-dimensional sphere.
Then there exists a constant $C$ such that
\[
\Pr\left[|v_1| \geq \frac{1}{C\sqrt{d}}\right] \geq \frac{1}{2}
\]
\end{claim}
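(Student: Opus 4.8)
The plan is to use the standard Gaussian representation of a uniform point on the sphere. If $g = (g_1, \ldots, g_d) \in \R^d$ has independent standard Gaussian coordinates, then $v = g/\|g\|_{\ell_2}$ is uniformly distributed on $S^{d-1}$, and in particular $v_1 = g_1/\|g\|_{\ell_2}$. Thus it suffices to show that $|g_1|$ is bounded below by an absolute constant, and $\|g\|_{\ell_2}$ is bounded above by $O(\sqrt d)$, each with probability close to $1$, and then intersect the two events.

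First I would fix the constant $c = 1/4$ and consider the event $A = \{|g_1| \ge c\}$. Since $g_1$ has density bounded by $1/\sqrt{2\pi}$, we get $\Pr[|g_1| < c] \le 2c/\sqrt{2\pi} < c = 1/4$, so $\Pr[A^c] \le 1/4$. Next I would consider the event $B = \{\|g\|_{\ell_2}^2 \le 4d\}$. Since $\E[\|g\|_{\ell_2}^2] = d$, Markov's inequality gives $\Pr[B^c] = \Pr[\|g\|_{\ell_2}^2 > 4d] \le 1/4$. By the union bound, $\Pr[A \cap B] \ge 1 - 1/4 - 1/4 = 1/2$. Finally, on $A \cap B$ we have $|v_1| = |g_1|/\|g\|_{\ell_2} \ge c/(2\sqrt d) = 1/(8\sqrt d)$, which proves the claim with $C = 8$ (and hence with any larger constant).

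There is no real obstacle here; the only point requiring a little care is that each of the two "bad" events $A^c$ and $B^c$ must have probability at most $1/4$ so that their union misses at least half of the probability mass, which forces the slightly quantitative choices of $c$ and of the threshold $4d$ (both of which have ample room). If sharper constants were wanted, one could replace the use of Markov's inequality for $B$ by Chebyshev's inequality applied to $\|g\|_{\ell_2}^2$, whose variance is $2d$, but this refinement is unnecessary for the stated bound.
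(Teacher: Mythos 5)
Your proof is correct, but it takes a different route from the paper. The paper works directly with the marginal density of $v_1$, which is proportional to $(1-t^2)^{(d-3)/2}$ on $[-1,1]$ (a shifted beta density), bounds its normalizing constant $\frac{1}{2^{d-3}}\Gamma(d-1)/\Gamma((d-1)/2)^2$ by $O(\sqrt{d})$ via Stirling's approximation, and then observes that an interval of length $\Theta(1/\sqrt{d})$ around $0$ therefore carries probability at most $1/2$. You instead use the Gaussian representation $v = g/\|g\|_{\ell_2}$ and intersect the two events $\{|g_1|\geq 1/4\}$ (controlled by the uniform bound $1/\sqrt{2\pi}$ on the Gaussian density) and $\{\|g\|_{\ell_2}^2\leq 4d\}$ (controlled by Markov's inequality), each failing with probability at most $1/4$. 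Your constants check out: on the intersection $|v_1|\geq \frac{1/4}{2\sqrt{d}} = \frac{1}{8\sqrt{d}}$, and the union bound leaves probability at least $1/2$, so $C=8$ works uniformly in $d$. What your argument buys is elementarity and an explicit constant: no special-function identities or Stirling estimates are needed, and it sidesteps the fact that the beta density is actually unbounded for $d=2$ (harmless in the paper since the blow-up is at $\pm 1$, away from the interval around $0$, but it requires a word of care there). What the paper's approach buys is a slightly stronger statement: a pointwise $O(\sqrt{d})$ bound on the density of $v_1$, which yields anti-concentration of $v_1$ on an interval around any point, not just around $0$; for the claim as stated, and for its use in Theorem~\ref{thm:highdim}, your event-based bound suffices.
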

\begin{proof}
We start by noting that the probability density function of $v_1$ at $t$ is proportional to $(1-t^2)^{(d-3)/2}$, which is also the probability density of the beta distribution, shifted so that the domain is $[-1, 1]$.
The probability density function at all points is bounded above by
\[
\frac{1}{2^{d-3}}\cdot \frac{\Gamma(d-1)}{\Gamma((d-1)/2)^2} \leq \frac{1}{2^{d-3}} \cdot  \frac{C_1(d-1)^{d-3/2}e^{-d+2}}{C_1^2((d-1)/2)^{d-2}e^{-d+1}} \leq C_2 \sqrt{d-1}
\]
for some constants $C_1$ and $C_2$, where the inequality follows from Stirling's approximation (see~\cite{J15}).
The claim follows by letting $C = C_2/4$.
\end{proof}
We now use Lemma~\ref{thm:equal} to prove Theorem~\ref{thm:highdim} as follows.

\begin{proof}[ of Theorem~\ref{thm:highdim}]
Let $A \in \mathcal{SO}(d)$ be a random rotation uniform over the Haar measure of the special orthogonal group.
Then it is enough to consider the random variable $\|Af_1(Y_1)v_1+\cdots+Af_n(Y_n)v_n-Ax_0\|_{\ell_2}$.
Additionally, the left-hand side in the statement of the theorem is bounded above by
\begin{equation}\label{eq:firstcoord}
\Pr[\left|(Af_1(Y_1)v_1+\cdots+Af_n(Y_n)v_n-Ax_0)_1\right| \leq R].
\end{equation}
This is because if the absolute value of the first coordinate of the random vector is greater than $R$, so is the Euclidean norm.

By Claim~\ref{claim:forrandomrot}, for any fixed $d$, it holds that $|f_i(Y_i)v_i| \geq 1/(C'\sqrt{d})$ for at least half of the $i$ for some constant $C'$.
By Lemma~\ref{thm:equal}, we have that Eq.~\eqref{eq:firstcoord} is bounded above by
\[
C'\cdot R\sqrt{d}\sup_{x_0 \in \R}\Pr\left[\left|(Af_1(Y_1)v_1+\cdots+Af_n(Y_n)v_n-x_0)_1 \right| \leq \frac{1}{C'\sqrt{d}}\right] \leq \frac{C\cdot R\sqrt{d}}{(1-\lambda)\sqrt{n}}
\]
as desired.
\end{proof}

\begin{remark}
In the case of one dimension, Theorem~\ref{thm:highdim} is tight up to a factor of $\sqrt{(1-\lambda)/(1+\lambda)}$.
To see this, consider the transition matrix on two states defined by 
\[
A =     \begin{pmatrix}
    \frac{1-\lambda}{2} & \frac{1+\lambda}{2} \\
    \frac{1+\lambda}{2} & \frac{1-\lambda}{2} 
  \end{pmatrix}
\]
with $f(1) = 1$ and $f(2) = -1$, and stationary distribution uniform over both states.
Such a Markov chain can be interpreted as first choosing a state at random, and then at each subsequent step choosing a new state uniformly at random with probability $1-\lambda$, or switching states with probability $\lambda$.
We can associate with this walk a sequence of numbers, $(X_1, X_2, \ldots)$ obtained as follows.
Whenever a state is chosen at random, we add a new entry in the sequence starting at $1$, and increase this entry every time the state is switched.
Then conditioned on this sequence, $f(Y_1)+f(Y_2)+\cdots+f(Y_n)$ is distributed as $\eps_1+\eps_2+\cdots+\eps_{\mathbf{n}}$ where $\mathbf{n}$ is the number of entries in the sequence that are odd.
Thus, if $\mathbf{n}$ is considered as a random variable,
\[
\Pr[f(Y_1)+f(Y_2)+\cdots +f(Y_n) = 0] \leq \mathbb{E}\left[\frac{C}{\sqrt{\mathbf{n}}}\right]
\]
If we assume that $n$ is large, then the probability that any given step in the walk is the start of a entry that will eventually be of odd length is approximately $1/(1+\lambda)$, and thus, $\mathbf{n}$ is approximately distributed like $B(n, (1-\lambda)/(1+\lambda))$, and thus
\[
\mathbb{E}\left[\frac{C}{\sqrt{\mathbf{n}}}\right] \leq \frac{C}{\sqrt{(1-\lambda)n/(1+\lambda)}}
\]
\end{remark}

\section{Extension to distinct $v_i$'s}

Theorem~\ref{thm:indrad}, the bound obtained in the independent case, is tight when $v_1 = \cdots = v_n = 1$.
It is reasonable to ask if one can obtain better bounds on the probability $\Pr[\eps_1 v_1 + \cdots + \eps_n v_n \in B]$ under certain restrictions of $v_1, \ldots, v_n$.
In particular, when the $v_i$ are distinct integers, S\'ark\"ozy and Szemeredi~\cite{SS65} showed that for all $x_0$ and for some constant $C$
\begin{equation}
\Pr[\eps_1 v_1 + \cdots + \eps_n v_n = x_0] \leq \frac{C}{n^{3/2}}, \label{eq:eqdiff}
\end{equation}
which is a factor $n$ smaller than Theorem~\ref{thm:indrad}.

Like Erd\H{o}s's proof of Theorem~\ref{thm:indrad}, the proof of the above by S\'ark\"ozy and Szemeredi uses a clever combinatorial argument.
However, Hal\'asz's Fourier-analytic argument can also be used to prove the above.
We prove a similar bound in the case of Markov chains.

Our proof is based on the techniques used in~\cite{TV06} for the same problem, in which the Fourier-analytic argument is over the group $\Z_p$ for some large enough $p$, rather than over the integers or over the real numbers.
The following claim is implicit in Corollary 7.16 in~\cite{TV06} and will be used in our computation.

\begin{claim}\label{claim:fordiff}
If $v_1, \ldots, v_n$ are distinct positive integers, then there exists a prime $p$ such that $p \geq v_i$ for all $i$, and
\[
 \frac{1}{p}\sum_{\xi \in \Z_p}\left[\prod_{i=1}^n|\cos(2\pi\xi \cdot v_i)|\right] \leq \frac{C}{n^{3/2}}.
\]
\end{claim}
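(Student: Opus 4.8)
The plan is to reduce the problem over $\Z_p$ to the integer estimate from Claim~\ref{eq:imptv} by observing that for each fixed $\xi\in\Z_p$ one can lift to a genuine real number $t = \xi/p \in [0,1)$, and that $\cos(2\pi \xi v_i / p) = \cos(2\pi t v_i)$. Thus the Riemann sum $\frac1p\sum_{\xi\in\Z_p}\prod_i |\cos(2\pi\xi v_i/p)|$ approximates the integral $\int_0^1 \prod_i |\cos(2\pi t v_i)|\,dt$, which by periodicity equals $\frac12\int_{-1}^1 \prod_i |\cos(2\pi t v_i)|\,dt$ — but this is \emph{not} quite what Claim~\ref{eq:imptv} bounds, since there the $v_i$ need only have $|v_i|\ge 1$ and the bound is $C/\sqrt{n}$, whereas here we want the stronger $C/n^{3/2}$ which must exploit distinctness. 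So the first real step is to recall \emph{why} distinctness gives the extra factor of $n$ in the integral estimate: the standard argument (Tao--Vu, following Hal\'asz) writes $|\cos\theta| \le \exp(-c\,\|\theta/\pi\|^2)$ for some $c>0$ (where $\|\cdot\|$ is distance to the nearest integer), so $\prod_i |\cos(2\pi t v_i)| \le \exp(-c\sum_i \|2 t v_i\|^2)$, and then one controls $\int \exp(-c \sum_i \|2tv_i\|^2)\,dt$ by counting, for each dyadic scale, how many $t$ make many of the $v_i t$ close to half-integers; distinctness of the integers $v_i$ bounds these counts and yields $n^{-3/2}$.

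The cleanest route is therefore: first prove (or cite as implicit in Corollary 7.16 of~\cite{TV06}) the real-variable statement that $\int_0^1 \prod_{i=1}^n |\cos(2\pi t v_i)|\,dt \le C/n^{3/2}$ for distinct positive integers $v_i$; then transfer it to $\Z_p$. For the transfer I would pick $p$ to be a prime larger than $n \cdot \max_i v_i$ (such $p$ exists by Bertrand's postulate and in particular $p \ge v_i$ for all $i$). The key point is that the function $g(t) = \prod_{i=1}^n|\cos(2\pi t v_i)|$ is a trigonometric polynomial of degree at most $\sum_i v_i \le n\max_i v_i < p$, so the $p$-point Riemann sum $\frac1p\sum_{\xi\in\Z_p} g(\xi/p)$ computes $\int_0^1 g(t)\,dt$ \emph{exactly} — this is the standard fact that equally spaced sampling with more than $\deg$ points integrates a trigonometric polynomial with no error. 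Hence $\frac1p\sum_{\xi\in\Z_p}\prod_i|\cos(2\pi\xi v_i/p)| = \int_0^1 \prod_i |\cos(2\pi t v_i)|\,dt \le C/n^{3/2}$, which is exactly the claim.

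Actually $g$ is a product of $|\cos|$'s, not a trig polynomial — $|\cos\theta|$ is not a polynomial in $e^{i\theta}$. So one should instead bound $g(t) \le h(t) := \prod_i \cos^2(2\pi t v_i)$ pointwise? No, that makes it smaller than a bad quantity. The correct fix: bound $|\cos\theta| \le \sqrt{|\cos\theta|}\cdot$ doesn't help either. The honest approach is to not insist on an \emph{exact} Riemann sum but to use a comparison: partition $[0,1)$ into the $p$ intervals $[\xi/p,(\xi+1)/p)$ and compare $g$ on each interval to its left endpoint value, controlling the oscillation of $g$ via its derivative, $|g'(t)| \le 2\pi \sum_i v_i \cdot (\text{something} \le 1) \le 2\pi n \max_i v_i$. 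Choosing $p \gg n \max_i v_i \cdot n^{3/2}$ makes the total error $\le p \cdot \frac1p \cdot \frac1p \cdot 2\pi n\max_i v_i = o(n^{-3/2})$, so $\frac1p\sum_{\xi} g(\xi/p) \le \int_0^1 g + o(n^{-3/2}) \le C/n^{3/2}$.

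The main obstacle, then, is not the $\Z_p$-to-$[0,1]$ transfer (which is routine analysis once $p$ is chosen large enough relative to $\max v_i$ and $n$) but rather establishing the sharp $n^{-3/2}$ bound on $\int_0^1 \prod_i|\cos(2\pi t v_i)|\,dt$ for distinct integers — this is the genuine Hal\'asz-type input and the place where distinctness is used. If it is truly available as "implicit in Corollary 7.16 of~\cite{TV06}" one can cite it; otherwise one reproves it via the Gaussian bound $|\cos\theta|\le e^{-c\,\mathrm{dist}(\theta/\pi,\Z)^2}$ and a level-set/counting argument, using that distinct integers $v_1,\dots,v_n$ cannot all simultaneously be within $1/4$ (mod $1$) of $1/(2t)$ for $t$ in a set of measure more than $O(1/v)$, summed over dyadic ranges of the relevant parameters to produce the $\sum$ of $\|2tv_i\|^2$ being $\Omega(\log(1/\text{measure}))$-ish and hence the claimed decay. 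I would present the citation route in the main text and defer any reproof to an appendix.
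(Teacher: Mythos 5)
Your reduction is sound, but note how it compares to what the paper actually does: the paper offers no proof at all for this claim, simply asserting that the $\Z_p$ average is implicit in Corollary 7.16 of~\cite{TV06}, whose argument is itself carried out modulo a large prime -- so the discrete statement can be quoted directly, with no transfer step. You instead route through the continuous bound $\int_0^1\prod_i|\cos(2\pi t v_i)|\,dt\le Cn^{-3/2}$ and transfer it to $\Z_p$ by a quantitative Riemann-sum comparison. That transfer is correct as you finally state it: you rightly discard the exact-quadrature idea (the absolute values destroy the trigonometric-polynomial structure), the Lipschitz bound $|g(t)-g(t')|\le 2\pi(\sum_i v_i)|t-t'|$ gives error at most $2\pi n\max_i v_i/p$, and Bertrand's postulate supplies a prime $p$ of size about $n^{5/2}\max_i v_i$, which also keeps the $\xi=0$ term $1/p$ below $n^{-3/2}$ (a point worth making explicit, since $p\ge\max_i v_i$ alone only gives $1/p\le 1/n$). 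You also correctly read $\cos(2\pi\xi v_i)$ as $\cos(2\pi\xi v_i/p)$; as literally written the claim's summand would be identically $1$.

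The gap, such as it is, lies where you yourself locate it: the $n^{-3/2}$ bound on the continuous integral for distinct integers. Your fallback sketch (``distinct integers cannot all simultaneously be within $1/4$ of $1/(2t)$ \ldots'') is not a proof and, as phrased, does not visibly use distinctness in the way Hal\'asz's level-set/iterated-sumset argument does; if you go the reproof route you would need to carry that argument out in full. If you go the citation route, be aware that what is implicit in~\cite{TV06} is the mod-$p$ form of the estimate rather than the real-integral form, so the most economical write-up is to cite (or reproduce) the $\Z_p$ computation directly -- exactly as the paper does -- and drop the transfer; your transfer only buys something if you insist on working from a genuinely continuous version of the Hal\'asz bound.
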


We use Claim~\ref{claim:fordiff} to prove Theorem~\ref{thm:diff} which is a Markov chain version of Eq.~\eqref{eq:eqdiff}.

\begin{proof}[ of Theorem~\ref{thm:diff}]
Let $p$ be the prime in Claim~\ref{claim:fordiff}.
Note that by Fourier inversion,
\begin{align}
\Pr[f_1&(Y_1)v_1+f_2(Y_2)v_2+\cdots +f_n(Y_n)v_n = x_0] \nonumber \\
&\leq \Pr[f_1(Y_1)v_1+f_2(Y_2)v_2+\cdots +f_n(Y_n)v_n \equiv x_0 \bmod{p}] \nonumber \\
&= 
\frac{1}{p}\sum_{\xi \in \Z_p}\left|\exp\left(-\frac{2\pi i}{N} \xi \cdot x_0\right)\E\left[\exp\left(\frac{2 \pi i}{N}\xi\cdot (f(Y_1)v_1+f(Y_2)v_2+\cdots +f(Y_n)v_n)\right)\right]\right|. \label{eq:maindiff}
\end{align}
Let $T_j = A-(1-\lambda)E_{\mu}$ for all $j$, and let $u_i$ be the vector defined by $u_j(y) = \exp(2 \pi i (\xi \cdot f_j(y)v_j)/N)$.
Then the absolute value of the expectation inside the right-hand side of Eq.~\eqref{eq:maindiff} is bounded above by
\begin{multline*}
\left\| U_1 (T_1+(1-\lambda) E_{\mu}) U_2 (T_2+(1-\lambda) E_{\mu}) U_3\cdots U_{n-1} (T_{n-1}+(1-\lambda) E_{\mu}) U_{n} \mathbf{1} \right\|_{L_1(\mu)} \leq
\\
\sum_{s \in \{0, 1\}^{n-1}}\left(\prod_{j: s_j = 1}\lambda \right)\left( \prod_{j: s_j = 0} (1-\lambda) \right)\left(\prod_{j \in t(s)} \left|\cos(2\pi \xi \cdot v_j)\right|\right),
\end{multline*}
by Lemma~\ref{lem:holderapplicationrefine},
where for each $s \in \{0, 1\}^{n-1}$, we define $t(s)$ to be the set of indices $j$ such that $s_{j-1} = s_{j} = 0$, or $s_j = 0$ if $j = 1$ or $s_{j-1} = 0$ if $j = {k+1}$.
Thus by Claim~\ref{claim:fordiff}, we can upper bound on the right-hand side of Eq.~\eqref{eq:maindiff} by
\begin{align*}
\frac{1}{2\pi} \sum_{s \in \{0, 1\}^{n-1}} \left(\prod_{j: s_j = 1}\lambda\right)\left( \prod_{j: s_j = 0} (1-\lambda) \right) \frac{C}{(|t(s)|+1)^{3/2}} ,
\end{align*}
where the inequality also holds in the case that $|t(s)| = 0$.

As in the proof of Theorem~\ref{thm:highdim}, let $r: \{0, 1\}^{n-1} \rightarrow [n-1]$ be defined as
\[
r = \left|\{j: s_j = s_{j+1} = 0\}\right|,
\]
so that $r(s) \leq |t(s)|$ for all $s \in \{0, 1\}^{n-1}$, and let $\mathbf{s}$ be a random vector from $\{0, 1\}^{n-1}$ so that for each $s \in \{0, 1\}^{n-1}$
\[
\Pr[\mathbf{s} = s] = \left(\prod_{j: s_j = 1}\lambda\right)\left( \prod_{j: s_j = 0} (1-\lambda) \right).
\] 
By the definition of $r(\mathbf{s})$, we have
\[
\Pr[f(Y_1)v_1+f(Y_2)v_2+\cdots +f(Y_n)v_n = 0] \leq \frac{1}{2\pi}\E\left[\frac{C}{(r(\mathbf{s})+1)^{3/2}}\right]
\]
As before, let $r' = B(\lfloor(n/2\rfloor-1, (1-\lambda)^2)+1$.
Then because $r'$ is dominated by $r(s)$,
\begin{equation}
\E\left[\frac{C}{(r(\mathbf{s})+1)^{3/2}}\right] \leq  \E\left[\frac{C}{r'^{3/2}}\right] \leq \left(\E\left[\frac{C^{4/3}}{r'^2}\right]\right)^{3/4}\label{eq:compdiff},
\end{equation}
where again the second inequality follows by Jensen's inequality.
Finally, Claim~\ref{claim:combinarg} can be used to upper-bound the right-hand side of Eq.~\eqref{eq:compdiff}.
\end{proof}

\section{A Pseudorandom Generator for the Littlewood-Offord Problem}

In this section we prove Theorem~\ref{thm:prglo}.
As stated in the introduction, this theorem can be interpreted as proving the existence of a pseudorandom generator for the Littlewood-Offord problem.

We start by describing the construction of $D$.
Our construction will be based on expander graphs which we define as follows.
Given a $d$-regular graph $G = (V, E)$, let $A$ be the normalized adjacency matrix of $G$ and let $J$ be the matrix whose entries are all $1/|V|$.
We say that a family of $d$-regular graphs $\mathcal{G}$ is a family of expanders if for all graphs $G$ in the family, 
\[
\|A-J\|_{L_2(\mu) \rightarrow L_2(\mu)} \leq \lambda
\]
for some constant $\lambda$ bounded away from $1$, where $\mu$ is the vector whose entries are all $1/|V|$.
Note that when $G = (V, E)$ is $d$-regular, the stationary distribution is $\mu$, and the averaging operator is $J$.
Thus, $1-\|A-J\|_{L_2(\mu) \rightarrow L_2(\mu)}$ is also the spectral gap of the Markov chain that is a simple random walk on $G$.
It is well known that there exist infinite families of expander graphs of constant degree $d$ independent of the number of vertices (see for example,~\cite{LPS88} and~\cite{M88}).

Let $G = (\{-1, 1\}^{k}, E)$ be a $d$-regular graph from such a family so that $\|A-J\|_{L_2(\mu) \rightarrow L_2(\mu)} \leq \lambda$ for some constant $\lambda$ independent of $k$.
We let our set $D$ be the set of concatenations of the labels of walks of length $n/k$ on $G$, and thus $D$ has cardinality $2^{k+C_1n/k}$ for some constant $C_1$ independent of $n$ and $k$.

\begin{proof}[ of Theorem \ref{thm:prglo}]
Let $\mu$ be the uniform measure on $\{-1, 1\}^k$ and let $D$ be as defined above.
Then by Theorem~\ref{thm:esseen},
\begin{equation}\label{eq:prgstart}
\sup_{x_0 \in \R} \Pr_{\eps \sim D}[|\eps_1 v_1+\eps_2 v_2+\cdots +\eps_n v_n-x_0| \leq 1] \leq C \int_{-1}^{1} |\E[\exp(2 \pi i\xi (\eps_1 v_1+\cdots+\eps_nv_n))]|d\xi
\end{equation}
For each $j \in [n/k]$, let $T_j = A-(1-\lambda)J$ and let $u_j \in \R^{\{-1, 1\}^k}$ be the vector defined by 
\[
u_j(w) = \exp(2 \pi i \omega (w_{(j-1)k+1}v_{(j-1)k+1}+\cdots+w_{jk}v_{jk} ))
\]
and let $U_j = \diag(u_j)$.
Then $|\E[\exp(2 \pi i\xi (\eps_1 v_1+\cdots+\eps_nv_n))]|$ is bounded above by,
\begin{multline}\label{eq:prgint}
\left\| U_1 (T_1+(1-\lambda) J) U_2 (T_2+(1-\lambda) J) U_3\cdots U_{n/k-1} (T_{n/k-1}+(1-\lambda) J) U_{n/k} \mathbf{1} \right\|_{L_1(\mu)} \leq
\\
 \sum_{s \in \{0, 1\}^k}\left(\prod_{j: s_j = 1}\lambda\right)\left( \prod_{j: s_j = 0} (1-\lambda) \right)\left(\prod_{j \in t(s)} \left|\langle u_j, \mu \rangle\right|\right),
\end{multline}
where the inequality follows by Lemma~\ref{lem:holderapplicationrefine}, and for each $s \in \{0, 1\}^{n/k-1}$, we define $t(s)$ to be the set of indices $j$ such that $s_{j-1} = s_{j} = 0$, or $s_j = 0$ if $j = 1$ or $s_{j-1} = 0$ if $j = {n/k}$.

Note that $\langle u_j, \mu\rangle$ is the Fourier transform at $\xi$ of the random variable $w_{(j-1)k+1}v_{(j-1)k+1}+\cdots+w_{jk}v_{jk}$ where each coordinate of $w$ is uniformly random over the set $\{-1, 1\}$.
This brings us back to the original setting of completely independent random variables, and by Eq.~\eqref{eq:fincos}, it follows that
\[
\langle u_j, \mu\rangle = \prod_{\ell=1}^{k} \cos(2 \pi v_{(j-1)k+\ell} \xi).
\]
Thus by inserting the above in Eq.~\eqref{eq:prgint} we obtain and upper-bound on the right-hand side of Eq.~\eqref{eq:prgstart} of
\begin{multline*}
\frac{1}{2\pi}\sum_{s \in \{0, 1\}^{n/k-1}}\int_{-1}^{1}\left(\prod_{j: s_j = 1}\lambda \right)\left( \prod_{j: s_j = 0} (1-\lambda) \right)\left(\prod_{j \in t(s)} \prod_{\ell=1}^{k} \left|\cos(2 \pi v_{(j-1)k+\ell} \xi)\right|\right) d\xi
\leq \\
\frac{1}{2\pi}\sum_{s \in \{0, 1\}^{n/k-1}}\left(\prod_{j: s_j = 1}\lambda \right)\left( \prod_{j: s_j = 0} (1-\lambda) \right) \frac{C}{\sqrt{k(|t(\mathbf{s})|+1)}},
\end{multline*}
where the inequality follows from Claim~\ref{eq:imptv},
We proceed by using the same argument as in Lemma~\ref{thm:equal} starting from Eq.~\eqref{eq:intme}, which gives an upper bound of $C/\sqrt{k\cdot(n/k)} = C/\sqrt{n}$ as desired.
Finally, we obtain a construction of the desired size by letting $k = \sqrt{n}$.
\end{proof}

\bibliographystyle{alphaabbrv}
\bibliography{smallballexpsamp}

\appendix
\section{Proof of Lemma~\ref{lem:holderapplicationrefine}}\label{app:proof}
We prove Lemma~\ref{lem:holderapplicationrefine}, which as mentioned previously, is a straightforward adaptation of the proof of a Lemma from~\cite{NaorRR17}.
Before getting to the proof, we first state the following two claims.

\begin{claim}\label{clm:jbetween}
For all $k \ge 1$, matrices $R_1,\ldots,R_k \in \R^{N \times N}$, and distributions $\mu$ over $[N]$,
\[
\|R_1 E_{\mu} R_2 E_{\mu} \cdots E_{\mu} R_k \mathbf{1} \|_{L_1(\mu)}  \prod_{i=1}^k \|R_i \mathbf{1}\|_{L_1(\mu)} \; .
\]
\end{claim}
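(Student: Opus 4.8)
The plan is to exploit the fact that $E_{\mu}$ is a rank-one operator: for any vector $v$ one has $E_{\mu} v = \langle \mu, v\rangle \mathbf{1}$, so each occurrence of $E_{\mu}$ in the product collapses everything to its right into a scalar multiple of $\mathbf{1}$. The only analytic input needed is the elementary bound $|\langle \mu, v\rangle| = \bigl|\sum_i \mu_i v_i\bigr| \le \sum_i \mu_i |v_i| = \|v\|_{L_1(\mu)}$, valid since the $\mu_i$ are nonnegative and sum to $1$.

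First I would peel off factors from the right-hand end. Writing $c_k := \langle \mu, R_k \mathbf{1}\rangle$, we have $E_{\mu} R_k \mathbf{1} = c_k \mathbf{1}$ with $|c_k| \le \|R_k \mathbf{1}\|_{L_1(\mu)}$; feeding this into the next block gives $R_{k-1} E_{\mu} R_k \mathbf{1} = c_k\, R_{k-1}\mathbf{1}$, and applying $E_{\mu}$ once more yields $E_{\mu} R_{k-1} E_{\mu} R_k \mathbf{1} = c_k c_{k-1} \mathbf{1}$ where $c_{k-1} := \langle \mu, R_{k-1}\mathbf{1}\rangle$ again satisfies $|c_{k-1}| \le \|R_{k-1}\mathbf{1}\|_{L_1(\mu)}$. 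Iterating this downward — formally, a decreasing induction on the number of $R$-blocks already absorbed — produces
\[
R_1 E_{\mu} R_2 E_{\mu} \cdots E_{\mu} R_k \mathbf{1} = \Bigl(\prod_{i=2}^{k} c_i\Bigr) R_1 \mathbf{1}, \qquad |c_i| \le \|R_i \mathbf{1}\|_{L_1(\mu)}.
\]

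Finally I would take the $L_1(\mu)$-norm of both sides, pull the scalar $\prod_{i=2}^k c_i$ out of the norm, and bound each $|c_i|$ by $\|R_i\mathbf{1}\|_{L_1(\mu)}$; this gives exactly $\|R_1 E_{\mu} R_2 \cdots E_{\mu} R_k \mathbf{1}\|_{L_1(\mu)} \le \prod_{i=1}^{k} \|R_i \mathbf{1}\|_{L_1(\mu)}$. There is no real obstacle here; the only point requiring a little care is bookkeeping in the induction so that the left end $R_1 \mathbf{1}$ is kept separate from the $k-1$ factors that are turned into scalars by the $k-1$ copies of $E_{\mu}$ (the base case $k=1$ being the trivial identity $\|R_1\mathbf{1}\|_{L_1(\mu)} = \|R_1\mathbf{1}\|_{L_1(\mu)}$).
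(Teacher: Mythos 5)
Your proposal is correct and is essentially the paper's own argument: the paper likewise observes that $E_{\mu}v = \E_{\mu}[v]\mathbf{1}$ collapses each block to a scalar bounded by $\|v\|_{L_1(\mu)}$ and finishes by induction, which is exactly your peeling-from-the-right scheme spelled out in more detail. No gaps; your explicit bookkeeping of the scalars $c_i$ just makes the paper's terse induction precise.
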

\begin{proof}
Notice that for any vector $v$, $E_{\mu}v = \E_{\mu}[v] \mathbf{1}$.  
The claim follows by noting that $\E_{\mu}[v] \leq \|v\|_{L_1(\mu)}$ and by induction.
\end{proof}

\begin{claim}\label{claimtwo}
Let $u_1, \ldots, u_{k+1} \in \C^{N}$ be so that $|(u_j)_i| \leq 1$ for all $i$ and $j$, let $U_j = \diag(u_j)$, and let $T_1, \ldots, T_k \in \R^{N \times N}$.  
Then,
\[
\left\|\left(\prod_{j=1}^{k} U_jT_j\right)U_{k+1}\mathbf{1}\right\|_{L_1(\mu)} 
\leq  
\prod_{j=1}^k \|T_j\|_{L_2(\mu) \rightarrow L_2(\mu)} \; ,
\]
\end{claim}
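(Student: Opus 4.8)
The plan is to pass from the $L_1(\mu)$ norm to the $L_2(\mu)$ norm and then exploit submultiplicativity of the operator norm. First I would observe that for any probability distribution $\mu$ on $[N]$ and any $v \in \C^N$, the Cauchy--Schwarz inequality (equivalently, Jensen applied to $t \mapsto t^2$) gives $\|v\|_{L_1(\mu)} = \E_\mu[|v|] \le (\E_\mu[|v|^2])^{1/2} = \|v\|_{L_2(\mu)}$. Hence it suffices to bound $\bigl\|\bigl(\prod_{j=1}^{k} U_j T_j\bigr) U_{k+1}\mathbf{1}\bigr\|_{L_2(\mu)}$.

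Next I would record two elementary facts about the $\mu$-weighted $\ell_2$ norm. Since every coordinate of $\mathbf{1}$ equals $1$, we have $\|\mathbf{1}\|_{L_2(\mu)}^2 = \sum_{i} \mu_i = 1$. And since each $U_j = \diag(u_j)$ with $|(u_j)_i| \le 1$ for all $i$, for any $v \in \C^N$ we get
\[
\|U_j v\|_{L_2(\mu)}^2 = \sum_{i} |(u_j)_i|^2\,|v_i|^2\,\mu_i \le \sum_i |v_i|^2 \mu_i = \|v\|_{L_2(\mu)}^2,
\]
so $\|U_j\|_{L_2(\mu) \rightarrow L_2(\mu)} \le 1$ for each $j = 1,\dots,k+1$.

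Then I would simply chain these estimates, using submultiplicativity of the operator norm along the product $U_1 T_1 \cdots U_k T_k U_{k+1}$:
\[
\Bigl\|\Bigl(\prod_{j=1}^{k} U_j T_j\Bigr) U_{k+1}\mathbf{1}\Bigr\|_{L_2(\mu)} \le \Bigl(\prod_{j=1}^{k} \|U_j\|_{L_2(\mu)\rightarrow L_2(\mu)}\,\|T_j\|_{L_2(\mu)\rightarrow L_2(\mu)}\Bigr)\,\|U_{k+1}\mathbf{1}\|_{L_2(\mu)} \le \prod_{j=1}^{k} \|T_j\|_{L_2(\mu)\rightarrow L_2(\mu)},
\]
where the last inequality uses $\|U_j\|_{L_2(\mu)\rightarrow L_2(\mu)} \le 1$ and $\|U_{k+1}\mathbf{1}\|_{L_2(\mu)} \le \|\mathbf{1}\|_{L_2(\mu)} = 1$. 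Combined with the first step this is exactly the claimed bound.

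I do not expect a real obstacle here; the one point that deserves a line of care is that the relevant operator norm is taken with respect to the $\mu$-weighted $L_2$ norm, so the contraction property $\|U_j\|_{L_2(\mu)\rightarrow L_2(\mu)} \le 1$ should be checked directly from the definition (as above) rather than quoted for the unweighted $\ell_2$ norm — but this is immediate because $U_j$ acts diagonally and the weights $\mu_i$ are left untouched by the coordinatewise multiplication.
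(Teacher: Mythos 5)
Your proof is correct and is essentially the paper's argument: pass from $L_1(\mu)$ to $L_2(\mu)$ by Jensen/Cauchy--Schwarz, then use submultiplicativity of the $L_2(\mu)\to L_2(\mu)$ operator norm together with $\|U_j\|_{L_2(\mu)\to L_2(\mu)}=\|u_j\|_{L_\infty(\mu)}\le 1$ and $\|\mathbf{1}\|_{L_2(\mu)}=1$. You simply spell out the contraction property of the diagonal operators in the weighted norm, which the paper states without computation.
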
 
\begin{proof}
By Jensen's inequality, the right-hand side is bounded above by
\[
\left\|\left(\prod_{j=1}^{k} U_jT_j\right)U_{k+1}\mathbf{1}\right\|_{L_2(\mu)} 
\]
and the claim follows by the definition of operator norm and the fact that $\|U_i\|_{L_2(\mu) \rightarrow L_2(\mu)} = \|u_i\|_{L_{\infty}(\mu)}$.
\end{proof}

\begin{claim}\label{claim:usesum}
Let $\mu \in R^N$ be a distribution, and let $E_{\mu}$ be the associated averaging operator.
Then for any $u \in \C^{N}$, 
\[
E_{\mu}\diag(u)E_{\mu} = \langle u, \mu \rangle E_{\mu}.
\]
\end{claim}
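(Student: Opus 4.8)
The plan is to verify the identity by a direct entrywise computation, using only the defining formula $(E_{\mu})_{ij} = \mu_j$ and the fact that $\diag(u)$ has $(k,\ell)$-entry $u_k$ when $k=\ell$ and $0$ otherwise. Multiplying the three matrices out, the $(i,j)$-entry collapses to a single sum over the middle index, and the diagonal factor $u_k$ is simply weighted by $\mu_k$ on each side:
\[
\bigl(E_{\mu}\diag(u)E_{\mu}\bigr)_{ij} = \sum_{k=1}^{N} (E_{\mu})_{ik}\, u_k\, (E_{\mu})_{kj} = \sum_{k=1}^{N} \mu_k\, u_k\, \mu_j = \mu_j \sum_{k=1}^{N} u_k \mu_k = \langle u, \mu\rangle\, \mu_j = \langle u,\mu\rangle\, (E_{\mu})_{ij}.
\]
Since this holds for every $i,j$, the matrix identity follows.

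Alternatively, and more in the spirit of the neighbouring claims, I would apply both sides to an arbitrary vector $v \in \C^{N}$ and use the observation (already recorded in the proof of Claim~\ref{clm:jbetween}) that $E_{\mu}v = \E_{\mu}[v]\mathbf{1}$. Then $\diag(u)E_{\mu}v = \E_{\mu}[v]\, u$, where $\E_{\mu}[v] = \langle v,\mu\rangle$ is a scalar, and applying $E_{\mu}$ once more gives $E_{\mu}\bigl(\E_{\mu}[v]\, u\bigr) = \E_{\mu}[v]\,\E_{\mu}[u]\,\mathbf{1} = \langle u,\mu\rangle\, E_{\mu}v$. As $v$ was arbitrary, the two operators coincide.

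There is no genuine obstacle here; the only point worth flagging is that $\mu$ need not be the uniform distribution, so $\langle u,\mu\rangle = \sum_i u_i\mu_i = \E_{\mu}[u]$ must be read as the $\mu$-weighted average of the entries of $u$. This is exactly the scalar that appears when $E_{\mu}\diag(u)E_{\mu}$ is contracted, and it is why the factors $\lvert\langle u_j,\mu\rangle\rvert$ in Lemma~\ref{lem:holderapplicationrefine} reduce to $\lvert\cos(2\pi\xi v_j)\rvert$: the hypothesis $\E_{\mu}[f_j] = 0$ forces $u_j$ to equal $e^{2\pi i\xi v_j}$ and $e^{-2\pi i\xi v_j}$ each on a set of $\mu$-measure $1/2$.
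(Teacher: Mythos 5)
Your first, entrywise computation is exactly the paper's proof: expand $(E_{\mu}\diag(u)E_{\mu})_{ij}=\sum_k \mu_k u_k \mu_j=\langle u,\mu\rangle\,(E_{\mu})_{ij}$, so the proposal is correct and takes essentially the same approach. The alternative operator-level argument via $E_{\mu}v=\E_{\mu}[v]\mathbf{1}$ is also valid, and your closing remark correctly explains how $\E_{\mu}[f_j]=0$ turns $\langle u_j,\mu\rangle$ into $\cos(2\pi\xi v_j)$, though that is not needed for the claim itself.
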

\begin{proof}
\[
(E_{\mu}\diag(u)E_{\mu})_{i, j} = \sum_{k \in [N]} (E_{\mu})_{i, k} u_{k} (E_{\mu})_{k, j} = \langle u, \mu \rangle \mu_j.\]
\end{proof}

\begin{proof}[ of Lemma~\ref{lem:holderapplicationrefine}]
For $j=1,\ldots,k$, let $T_{j, 0} = (1-\lambda)E_{\mu}$ and $T_{j, 1} = T_j$.
Let $U_{i}' = U_i$ if $i \in t(s)$, and $U_{i}' = I$ otherwise.
Then using the triangle inequality, the left-hand side of~\eqref{eq:splittingj} is at most
\begin{align}
\sum_{s \in \{0, 1\}^k} \left\|\left(\prod_{j=1}^k U_jT_{j, s_j}\right) U_{k+1} \mathbf{1}\right\|_{L_1(\mu)} 
=
\sum_{s \in \{0, 1\}^k} \left\|\left(\prod_{j}^k U_j'T_{j, s_j}\right) U_{k+1}' \mathbf{1}\right\|_{L_1(\mu)} \left(\prod_{j \in t(s)} \left|\langle \mu, u \rangle\right|\right) \label{eq:mainclaiminmono}
\end{align}
where the equality follows from Claim~\ref{claim:usesum} and the fact that $E_{\mu}^2 = E_{\mu}$.

Fix an $s \in \{0, 1\}^n$ and let $r_1, \ldots, r_{\ell} \in t(s)$ be the indices for which the ${r_i}$th coordinate of $s$ is $0$.
Then by Claim~\ref{clm:jbetween},
\begin{multline*}
\left\|\left(\prod_{j}^k U_j'T_{j, s_j}\right) U_{k+1}' \mathbf{1}\right\|_{L_1(\mu)} \leq 
(1-\lambda)\|U'_1T_{1}\cdots T_{{r_1-1}} U'_{r_1} \mathbf{1} \|_{L_1(\mu)} \\
(1-\lambda)\|U'_{{r_1+1}}T_{{r_1+1}}\cdots T_{{r_2-1}} U'_{r_2} \mathbf{1} \|_{L_1(\mu)} 
\cdots 
(1-\lambda)\|U'_{{r_{\ell}+1}}T_{{r_{\ell}+1}}\cdots T_{k} U'_{k+1} \mathbf{1} \|_{L_1(\mu)}  \; .
\end{multline*}
The claim now follows by applying Claim~\ref{claimtwo}.
\end{proof}

\end{document}